\tikzset{stretch/.initial=1}
\newcommand\drawloop[4][]%
\def\@tocline#1#2#3#4#5#6#7{\relax
  \ifnum #1>\c@tocdepth 
  \else
    \par \addpenalty\@secpenalty\addvspace{#2}%
    \begingroup \hyphenpenalty\@M
    \@ifempty{#4}{%
      \@tempdima\csname r@tocindent\number#1\endcsname\relax
    }{%
      \@tempdima#4\relax
    }%
    \parindent\z@ \leftskip#3\relax \advance\leftskip\@tempdima\relax
    \rightskip\@pnumwidth plus4em \parfillskip-\@pnumwidth
    #5\leavevmode\hskip-\@tempdima
      \ifcase #1
       \or\or \hskip 1em \or \hskip 2em \else \hskip 3em \fi%
      #6\nobreak\relax
    \dotfill\hbox to\@pnumwidth{\@tocpagenum{#7}}\par
    \nobreak
    \endgroup
  \fi}
\newtheorem{theorem}{Theorem}[section]
\newtheorem{lemma}[theorem]{Lemma}
\newtheorem{corollary}[theorem]{Corollary}
\newtheorem{proposition}[theorem]{Proposition}
\newtheorem{question}[theorem]{Question}
\newtheorem{conjecture}[theorem]{Conjecture}
\theoremstyle{definition}
\newtheorem{defn}[theorem]{Definition}
\newtheorem{remark}[theorem]{Remark}
\newcommand{\mc}{\mathcal}
\newcommand{\mf}{\mathbf}
\newcommand{\mb}{\mathbb}
\DeclareMathOperator{\ab}{Z}
\DeclareMathOperator{\tran}{\Theta}
\DeclareMathOperator{\q}{c}
\DeclareMathOperator{\ns}{X}
\DeclareMathOperator{\nss}{Y}
\DeclareMathOperator{\co}{\circ\hspace{-0.02 cm}}
\DeclareMathOperator{\cu}{C}
\DeclareMathOperator{\orb}{orb}
\DeclareMathOperator{\abph}{\mc{U}}
\newcommand*{\db}[1]{\llbracket #1\rrbracket}
\begin{document}
\vspace*{-1cm}

\title[On measure-preserving $\mb{F}_p^\omega$-systems of order $k$]{On measure-preserving $\mb{F}_p^\omega$-systems of order $k$}

\author{Pablo Candela}
\address{Universidad Aut\'onoma de Madrid and ICMAT\\ 
Ciudad Universitaria de Cantoblanco\\ Madrid 28049\\ Spain}
\email{pablo.candela@uam.es}

\author{Diego Gonz\'alez-S\'anchez}
\address{MTA Alfr\'ed R\'enyi Institute of Mathematics, Re\'altanoda u. 13-15.\\
Budapest, Hungary, H-1053}
\email{diegogs@renyi.hu}

\author{Bal\'azs Szegedy}
\address{MTA Alfr\'ed R\'enyi Institute of Mathematics, Re\'altanoda u. 13-15.\\
Budapest, Hungary, H-1053}
\email{szegedyb@gmail.com}

\begin{abstract}
Building on previous work in the nilspace-theoretic approach to the study of Host--Kra factors of measure-preserving systems, we prove that every ergodic $\mb{F}_p^\omega$-system of order $k$ is a factor of an Abramov $\mb{F}_p^\omega$-system of order $k$. This answers a question of Jamneshan, Shalom and Tao.
\end{abstract}
\keywords{$\mb{F}_p^\omega$-systems, Abramov systems, Host-Kra factors.}
\maketitle

\vspace*{-1cm}
\section{Introduction}
The uniformity seminorms introduced by Host and Kra in ergodic theory \cite{HK-non-conv} are tools of great use for the analysis of various types of ergodic averages, with many combinatorial and number-theoretic applications (see for instance \cite{Nikos} or \cite{HKbook}). The \emph{Host--Kra factors} of an ergodic measure-preserving system, i.e.\ the system's characteristic factors for the Host--Kra seminorms, play a central role in this analysis\footnote{Such averages can also be analyzed via the characteristic factors used by Ziegler in \cite{Ziegler}. We distinguish Host--Kra factors from Ziegler factors since, as explained in \cite[\S 2.6]{ABS}, in general these two notions differ, even though in some contexts the notions may coincide (e.g.\ for $\mb{Z}$-actions).}. The study of these factors is therefore a major topic in this direction. Initiated in \cite{HK-non-conv} for $\mb{Z}$-actions, this study has been extended to other group actions in several works, including  \cite{ABB,BTZ,BTZ2,CScouplings,GL,Shalom,Shalom2,Shalom3}. 

This topic is also closely related to the analysis of Gowers norms\footnote{The Host--Kra seminorms are ergodic-theoretic analogues of the Gowers norms from \cite{GSz}.}  in arithmetic combinatorics, and especially to the topic of inverse theorems for these norms (in fact the two topics can be studied in a common framework, see \cite{CScouplings}). One of the first cases in which this relation was established and utilized is the case of actions of the additive group of the vector space $\mb{F}_p^\omega:=\bigoplus_{n=1}^\infty \mb{F}_p$ of prime characteristic $p$. This case attracted interest especially due to its connections with the setting of \emph{finite-field models} in arithmetic combinatorics \cite{Wolfsurvey}. In particular, in \cite{TZ-High} Tao and Ziegler established a variant of Furstenberg's correspondence principle which made it possible to prove the inverse theorem for the Gowers $U^{k+1}$-norm on vector spaces $\mb{F}_p^n$ (in the \emph{high-characteristic case}, i.e.\ for  $p>k$) by proving a result describing the structure of the Host--Kra factor of order $k$ for ergodic $\mb{F}_p^\omega$-systems in terms of \emph{phase polynomials} (or \emph{polynomial phase functions}). A structural description of this kind was concomitantly proved by Bergelson, Tao and Ziegler in \cite{BTZ} (see \cite[Theorem 1.19]{BTZ}). More precisely, they proved that the Host--Kra factor of order $k$ for ergodic $\mb{F}_p^\omega$-systems (for $p>k$) is a so-called \emph{Abramov system of order at most $k$}. Recall that an ergodic $\mb{F}_p^\omega$-system $\ns$ is said to be \emph{Abramov of order at most $k$} if the linear span of the phase polynomials of degree at most $k$ on $\ns$ is dense in $L^2(\ns)$ (see \cite{BTZ} or \cite[\S 5.2]{CGSS-p-hom}). In addition to this description in the high-characteristic case, in \cite{BTZ} it was also proved that in general the $k$-th Host--Kra factor of an ergodic $\mb{F}_p^\omega$-system is always an Abramov system of order $C$ for some constant $C$ depending only on $k$ (see \cite[Theorem 1.20]{BTZ}). This motivated the following conjecture (\cite[Remark 1.21]{BTZ}).

\begin{conjecture}\label{conj:BTZ}
For every ergodic $\mb{F}_p^\omega$-system and every $k\in \mb{N}$, the Host--Kra factor of order $k$ of this system is an Abramov system of order at most $k$.
\end{conjecture}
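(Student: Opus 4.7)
The plan is to work within the nilspace-theoretic framework that the authors have developed in previous work. First I would represent the $k$-th Host--Kra factor of the given ergodic $\mb{F}_p^\omega$-system as the translational system on a $k$-step nilspace $\comp$ equipped with an ergodic $\mb{F}_p^\omega$-action by nilspace translations. Since the acting group is $p$-torsion, one expects $\comp$ to carry a $p$-homogeneous structure in the sense of \cite{CGSS-p-hom}, with structure groups $\ab_i(\comp)$ that are elementary abelian $p$-groups. This structural representation reduces the ergodic-theoretic conjecture to an algebraic/functional question on $p$-homogeneous nilspaces: showing that the characters of $\comp$ that arise from degree-$\leq k$ phase polynomials have dense linear span in $L^2(\comp)$.

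Next I would argue by induction on $k$. The base case $k=1$ is the Kronecker factor, which is a rotation on an $\mb{F}_p^\omega$-module and is tautologically Abramov of order $1$. For the inductive step, the extension from the order-$(k-1)$ factor to the order-$k$ factor is controlled by a cocycle into the top structure group $\ab_k(\comp)$, and the goal is to show that every such cocycle is cohomologous to one built from a phase polynomial of degree exactly $k$. In the high-characteristic case ($p>k$) this is essentially \cite[Theorem~1.19]{BTZ}; the content of the general conjecture is to remove the characteristic restriction. The $p$-homogeneous nilspace machinery should yield, for each character of $\ab_k(\comp)$, a polynomial parameterization of the corresponding principal $\mb{F}_p$-bundle whose degree is controlled by the step of the nilspace, and pulling back a nontrivial character of $\mb{F}_p$ along this parameterization then produces the desired degree-$k$ phase polynomial.

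The main obstacle is the low-characteristic behavior: when $p\le k$, the classical constructions of phase polynomials naturally produce higher-degree objects (this is why \cite[Theorem~1.20]{BTZ} only achieves a degree bound $C$ depending on $k$, rather than $k$ itself). Overcoming this requires a genuinely nilspace-theoretic argument exploiting the rigid algebraic structure of $p$-homogeneous $k$-step nilspaces, rather than iterating a derivative operator as in the original ergodic approach. A natural intermediate result, matching the statement announced in the abstract, would be to first prove that every ergodic $\mb{F}_p^\omega$-system of order $k$ is a \emph{factor} of an Abramov system of order $k$; this is strictly easier because one may enlarge the structure groups freely when constructing the cover. Descending the Abramov property from such a cover down to the original system would then require showing that sufficiently many phase polynomials on the cover are invariant under the fiber group of the factor map, and hence descend without an increase in degree. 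This final descent step, which translates into a cohomological vanishing statement for the fiber action, is where I expect the greatest technical difficulty.
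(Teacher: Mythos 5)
You are attempting to prove a statement that the paper does not prove, and that is in fact known to be \emph{false}. Conjecture \ref{conj:BTZ} is stated in the paper only as motivation and historical context: as the introduction explains, Jamneshan, Shalom and Tao constructed a counterexample in \cite{JST2}, namely a Host--Kra $\mb{F}_2^\omega$-system of order $5$ that is not Abramov of order $5$. The conjecture is known to hold for $k\le p+1$ (by \cite{BTZ} for $k<p$ and by \cite{CGSS-p-hom} for $k\le p+1$), but it fails in general in low characteristic. The paper's actual contribution is the strictly weaker statement of Theorem \ref{thm:main-intro}, answering Question \ref{Q:JST}: every ergodic $\mb{F}_p^\omega$-system of order $k$ is a \emph{factor} of an Abramov system of order at most $k$.

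Your proposal correctly identifies the nilspace-theoretic representation of the Host--Kra factor as a $p$-homogeneous nilspace system (this is \cite[Theorem 5.3]{CGSS-p-hom}, used in the paper), and your penultimate observation --- that the ``factor of an Abramov system'' statement is easier because one may enlarge the structure when constructing the cover --- is exactly the route the paper takes, via a fibration from the universal $p$-homogeneous nilspace $\mc{H}_{p,k}$ and a lift of the orbit closure into $\hom(\mc{D}_1(\mb{F}_p^\omega),\mc{H}_{p,k})$. But the ``final descent step'' that you flag as the greatest technical difficulty --- descending the phase polynomials from the cover to the original system without raising the degree --- is not merely difficult: it is impossible in general, and the obstruction you would need to kill is precisely the cocycle-theoretic obstruction that \cite{JST2} exhibits. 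No argument along your proposed lines can close this gap, because the target statement is false for $p=2$, $k=5$.
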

\noindent As mentioned above, the main result of \cite{BTZ}  established this conjecture for $k<p$. 

More recently, in \cite{CGSS-p-hom} a different approach using nilspace theory extended the confirmation of Conjecture \ref{conj:BTZ} to $k\leq p+1$. This involved \emph{nilspace systems} \cite[Definition 5.10]{CScouplings}. These systems form a class of measure-preserving (and also topological dynamical) systems which extends that of the classical nilsystems, and which is useful in the analysis of Host--Kra factors (see \cite[Theorem  5.11]{CScouplings} and \cite[Theorem 5.1]{CGSS}) as well as in topological dynamics \cite{GMV3}. The above-mentioned results from \cite{CGSS-p-hom} on Conjecture \ref{conj:BTZ} involved recasting the conjecture as a problem of describing certain nilspace systems based on a special class of compact nilspaces, called \emph{$p$-homogeneous nilspaces} (further discussed below).

Even more recently, by an interesting use of nilspace-theoretic cocycles, Jamneshan, Shalom and Tao produced a counterexample to Conjecture \ref{conj:BTZ} for $k=5$ and $p=2$ in \cite{JST2}. In their related paper \cite{JST1}, they posed the following question, which can be viewed as asking for the next best thing once Conjecture \ref{conj:BTZ} is known not to hold in general (see \cite[Question 1.6]{JST1}). 

\begin{question}\label{Q:JST}
Can every ergodic $\mb{F}_p^\omega$-system of order $k$ be extended \textup{(}within the category of $\mb{F}_p^\omega$-systems\textup{)} to an Abramov system of order at most $k$?    
\end{question}

\noindent The main result of this paper is the following affirmative answer to Question \ref{Q:JST}.

\begin{theorem}\label{thm:main-intro}
Let $(\ns,\mc{X},\lambda)$ be\footnote{Here, as usual, $\mc{X}$ denotes a $\sigma$-algebra on the set $\ns$, and $\lambda$ denotes a probability measure on $\mc{X}$.} an ergodic $\mb{F}_p^\omega$-system of order $k$. Then there exists a minimal distal topological dynamical $\mb{F}_p^\omega$-system $\nss$,  and an ergodic $\mb{F}_p^\omega$-invariant Borel probability measure $\mu$ on $\nss$, such that the $\mb{F}_p^\omega$-system $(\nss,\mc{Y},\mu)$ is Abramov of order at most $k$ and $(\ns,\mc{X},\lambda)$ is a factor of $(\nss,\mc{Y},\mu)$. 
\end{theorem}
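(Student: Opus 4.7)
The plan is to represent the system via a nilspace system and then construct a $p$-homogeneous extension of the underlying nilspace that carries a lifted $\mb{F}_p^\omega$-action. First, by the nilspace-theoretic framework from \cite[Theorem 5.11]{CScouplings}, the given ergodic $\mb{F}_p^\omega$-system of order $k$ is isomorphic (as a measure-preserving $\mb{F}_p^\omega$-system) to a nilspace system $(\mc{N}, T)$, where $\mc{N}$ is a compact $k$-step nilspace carrying a continuous $\mb{F}_p^\omega$-action $T$ by translations and the invariant measure is the Haar measure on $\mc{N}$. This $(\mc{N}, T)$ is automatically a minimal distal topological dynamical $\mb{F}_p^\omega$-system. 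Following \cite{CGSS-p-hom}, the obstruction to the system being Abramov of order at most $k$ is precisely that $\mc{N}$ need not be \emph{$p$-homogeneous}. So the theorem reduces to the following nilspace-theoretic statement: given such $(\mc{N}, T)$, produce a compact $p$-homogeneous $k$-step nilspace $\wt{\mc{N}}$, a fiber-surjective nilspace morphism $\pi: \wt{\mc{N}} \to \mc{N}$, and a continuous $\mb{F}_p^\omega$-action on $\wt{\mc{N}}$ by translations that lifts $T$.

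Next, I would construct $\wt{\mc{N}}$ inductively along the canonical factor sequence $\mc{N}_0 \subset \mc{N}_1 \subset \cdots \subset \mc{N}_k = \mc{N}$, lifting $\mc{N}_i$ to a $p$-homogeneous extension $\wt{\mc{N}}_i$ at each stage. Given $\wt{\mc{N}}_{i-1}$, the extension $\mc{N}_i \to \mc{N}_{i-1}$ is an $\mc{A}_i$-bundle classified by a cocycle of degree $i$ valued in the compact abelian structure group $\mc{A}_i$. Using Pontryagin duality I would decompose this cocycle over characters of $\mc{A}_i$, each yielding a degree-$i$ polynomial on $\mc{N}_{i-1}$ with values in $\mb{R}/\mb{Z}$ or a finite cyclic group, and then replace $\mc{A}_i$ by a compact $p$-torsion abelian group $\wt{\mc{A}}_i$ together with a continuous surjection $\wt{\mc{A}}_i \to \mc{A}_i$ such that each of these characters, pulled back to $\wt{\mc{N}}_{i-1}$, lifts to a polynomial of degree at most $i$ in the characters of $\wt{\mc{A}}_i$ (a lifting problem that becomes tractable because $\wt{\mc{N}}_{i-1}$ is already $p$-homogeneous). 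Then $\wt{\mc{N}}_i$ is defined as the bundle over $\wt{\mc{N}}_{i-1}$ with structure group $\wt{\mc{A}}_i$ and the lifted cocycle.

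Once $\wt{\mc{N}}$ is built, I would lift the $\mb{F}_p^\omega$-action: each translation in $T$ induces, on each layer, a cocycle valued in $\mc{A}_i$ which, by the same mechanism, admits an $\wt{\mc{A}}_i$-valued lift, and these assemble into a continuous homomorphism $\wt{T}: \mb{F}_p^\omega \to \tran(\wt{\mc{N}})$, using the freeness of $\mb{F}_p^\omega$ as an $\mb{F}_p$-vector space to select generators consistently. The resulting nilspace system $(\wt{\mc{N}}, \wt{T})$ is Abramov of order at most $k$ by the characterization in \cite[\S 5.2]{CGSS-p-hom}, since $\wt{\mc{N}}$ is $p$-homogeneous. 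Minimality holds because the closure of any orbit is a closed translation-invariant subnilspace projecting onto all of $\mc{N}$, hence must equal $\wt{\mc{N}}$; distality is automatic for translation-actions on compact nilspaces; and ergodicity of the Haar measure propagates through each bundle step by a standard Fourier argument on $\wt{\mc{A}}_i$. The map $\pi$ is then a measure-preserving factor map in the required sense.

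The main obstacle, I expect, is the cohomological lifting at each inductive step: the group $\wt{\mc{A}}_i$ and the lift of the classifying cocycle must simultaneously accommodate the $p$-homogeneity constraint, the lifted translation cocycles encoding the action, and continuity/compactness. The Jamneshan--Shalom--Tao counterexample in \cite{JST2} shows that one cannot in general choose $\wt{\mc{A}}_i$ isomorphic to $\mc{A}_i$; the gain of working with extensions rather than with the system itself is precisely the freedom to let $\wt{\mc{A}}_i$ be substantially larger, at the cost of enriching the action by additional lifted components. The delicate part will be organizing this enlargement so that ergodicity survives and so that the lifted cocycles land in the correct degree of the nilspace filtration, which is where one expects to use in an essential way the constructions and conventions set up in \cite{CGSS-p-hom} for $p$-homogeneous nilspaces.
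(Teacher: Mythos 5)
There is a genuine gap, and it sits at the heart of your strategy: you assert that ``the obstruction to the system being Abramov of order at most $k$ is precisely that $\mc{N}$ need not be $p$-homogeneous,'' and you conclude at the end that $(\wt{\mc{N}},\wt{T})$ is Abramov \emph{because} $\wt{\mc{N}}$ is $p$-homogeneous. Both claims are false. By \cite[Theorem 5.3]{CGSS-p-hom} the underlying nilspace of an ergodic $\mb{F}_p^\omega$-system of order $k$ is \emph{already} $p$-homogeneous, and yet the Jamneshan--Shalom--Tao example in \cite{JST2} shows such a system need not be Abramov of order $k$; so $p$-homogeneity of the underlying nilspace neither fails for the original system nor suffices for the Abramov property of the extension. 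What actually forces the Abramov property in this setting is that the nilspace be an \emph{abelian group-nilspace} (a compact abelian group with a degree-$k$ filtration): then the characters $f\mapsto\chi(f(z))$ on the polynomial subshift $\hom(\mc{D}_1(\mb{F}_p^\omega),\cdot)$ are continuous phase polynomials of degree at most $k$ that separate points, and Stone--Weierstrass gives density. Your construction, even if completed, would deliver a $p$-homogeneous $\wt{\mc{N}}$ with no reason to be a group-nilspace, and hence no route to the Abramov conclusion.

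Beyond this, the inductive cocycle-lifting scheme runs directly into the cohomological obstruction that \cite{JST2} exhibits, and you offer no mechanism to overcome it; the paper circumvents cocycle cohomology entirely by invoking a single universal object, the abelian group-nilspace $\mc{H}_{p,k}$ of \cite{CGSS-aff-ex}, which admits a fibration onto \emph{every} compact $k$-step $p$-homogeneous nilspace, and by lifting not the whole bundle structure but one polynomial orbit: the system $\ns$ is realized (via unique ergodicity) as an orbit closure $\overline{\orb(\phi^*(x_0))}$ inside $\hom(\mc{D}_1(\mb{F}_p^\omega),\ns)$, the point $\phi^*(x_0)$ is lifted through the fibration to some $g\in\hom(\mc{D}_1(\mb{F}_p^\omega),\mc{H}_{p,k})$, and $\nss:=\overline{\orb(g)}$ is the desired extension. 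This also repairs two secondary weaknesses in your sketch: minimality does not follow from your argument (an orbit closure projecting onto all of $\mc{N}$ need not fill a bundle extension), but it does follow for $\overline{\orb(g)}$ from distality of the polynomial subshift over a group-nilspace; and you do not need ergodicity of a Haar measure on the extension at all --- any invariant measure on $\nss$ pushes forward to the unique invariant measure $\lambda$ on $\ns$, so one simply picks an ergodic one.
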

\noindent As mentioned above, a central role in the nilspace approach to the analysis of $\mb{F}_p^\omega$-systems is played by $p$-homogeneous nilspaces \cite[Definition 1.2]{CGSS-p-hom}. In particular it was proved in \cite[Theorem 5.3]{CGSS-p-hom} that the Host--Kra factors for such systems are isomorphic to uniquely ergodic nilspace systems whose underlying nilspace is $p$-homogeneous. This is crucial for our proof of Theorem \ref{thm:main-intro}, which we now outline.

Using unique ergodicity, any given ergodic $\mb{F}_p^\omega$-system $\ns$ of order $k$ (i.e.\ isomorphic to its $k$-th Host--Kra factor) can be viewed as a system consisting of a single orbit closure $\overline{\orb(x_0)}$ in the space of nilspace morphisms $\hom(\mc{D}_1(\mb{F}_p^\omega),\ns)$ (this orbit is itself an $\mb{F}_p^\omega$-system under shifting of the morphism). Then, we use one of the main  results from \cite{CGSS-aff-ex} to obtain a nilspace fibration $\varphi$ from a ``universal" $k$-step $p$-homogeneous nilspace $\mc{H}_{p,k}$ onto $\ns$ (see \cite[Theorem 4.3]{CGSS-aff-ex}). We can then lift the orbit closure of $x_0$ through $\varphi$ to an orbit closure in $\hom(\mc{D}_1(\mb{F}_p^\omega),\mc{H}_{p,k})$. The fact that $\mc{H}_{p,k}$ is an \emph{abelian group-nilspace} (i.e.\ a nilspace based on a filtered abelian group) implies that the latter orbit is Abramov as a topological $\mb{F}_p^\omega$-system (hence also Abramov as a measure-preserving system). In fact, we show that this is an equivalent description of topological Abramov $\mb{F}_p^\omega$-system, namely, that every such system is the closure of the orbit of some polynomial in $\hom(\mc{D}_1(\mb{F}_p^\omega),\mc{H}_{p,k})$; see Theorem \ref{thm:AbramovChar} (the notion of topological Abramov systems is recalled in Section \ref{sec:homspace}).

To detail the above argument, we begin by gathering the required properties of systems of the form $\hom(\mc{D}_1(\mb{F}_p^\omega),\ns)$ in Section \ref{sec:homspace} below. We then focus on the case $\mc{H}_{p,k}$ in Section \ref{sec:CharSec}. The main result is then proved in Section \ref{sec:mainproof}.

\vspace*{0.5cm}

\noindent \textbf{Acknowledgements}. All authors used funding from project PID2020-113350GB-I00 from Spain’s MICINN.
The second-named author received funding from projects KPP 133921 and Momentum
(Lend\"ulet) 30003 of the Hungarian Government. The research was also supported partially
by the NKFIH ``\'Elvonal” KKP 133921 grant and partially by the Hungarian Ministry of
Innovation and Technology NRDI Office within the framework of the Artificial Intelligence
National Laboratory Program. We thank Asgar Jamneshan, Or Shalom and Terence Tao for useful feedback on this paper.

\section{\texorpdfstring{The space of nilspace morphisms $\hom(\mc{D}_1(\ab),\ns)$ as a subshift}{The space of nilspace morphisms hom(D1(Z),X) as a subshift}}\label{sec:homspace}
\begin{defn}
Let $\ab$ be a countable discrete abelian group and let $\ns$ be a compact $k$-step nilspace. We denote by $\hom(\mc{D}_1(\ab),\ns)$ the set of all nilspace morphisms $f:\mc{D}_1(\ab)\to \ns$.
\end{defn}
\noindent We equip the set $\hom(\mc{D}_1(\ab),\ns)$  straightaway, and for the rest of this paper, with a natural topology inherited from that of $\ns$, namely the subspace topology as a subset of $\ns^{\ab}$. Note that the topology on $\ns^{\ab}$ is compact Hausdorff, and since $\ab$ is countable, the topology is also second-countable (the topology on a compact nilspace is assumed to be second-countable by default \cite{Cand:Notes2}).

\begin{lemma}
The set $\hom(\mc{D}_1(\ab),\ns)$ is a closed subset of $\ns^{\ab}$, hence a compact space.
\end{lemma}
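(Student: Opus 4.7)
The plan is to realize $\hom(\mc{D}_1(\ab),\ns)$ as an intersection, over all cube configurations, of closed subsets of the compact Hausdorff space $\ns^{\ab}$; once this is done, compactness follows immediately since a closed subset of a compact space is compact.

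More specifically, by the definition of a morphism of nilspaces, a map $f\in \ns^{\ab}$ belongs to $\hom(\mc{D}_1(\ab),\ns)$ precisely when, for every $n\in \mb{N}$ and every cube $c\in \cu^n(\mc{D}_1(\ab))$, the composed map $f\co c : \db{n}\to \ns$ lies in $\cu^n(\ns)$. Fixing such a cube $c$, I would introduce the evaluation-at-$c$ map $\pi_c : \ns^{\ab} \to \ns^{\db{n}}$ sending $f\mapsto f\co c$. This map is continuous, as it depends only on the finitely many coordinates of $f$ indexed by the values $c(v)$, $v\in \db{n}$. Since $\ns$ is a compact nilspace, the cube set $\cu^n(\ns)$ is closed in $\ns^{\db{n}}$, and therefore the preimage $\pi_c^{-1}(\cu^n(\ns))$ is closed in $\ns^{\ab}$.

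Intersecting these preimages over all $n\in \mb{N}$ and all $c\in \cu^n(\mc{D}_1(\ab))$ yields $\hom(\mc{D}_1(\ab),\ns)$ as a closed subset of $\ns^{\ab}$, which proves the lemma. No serious obstacle is anticipated here: the argument is a direct consequence of the product-topology description of $\ns^{\ab}$ combined with the standing requirement that cube sets be closed in a compact nilspace. The only point worth mentioning is that countability of $\ab$ is not needed for the closedness assertion (it only enters to ensure second-countability, which is used elsewhere in the paper).
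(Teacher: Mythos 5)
Your proposal is correct and follows essentially the same route as the paper: both write $\hom(\mc{D}_1(\ab),\ns)$ as the intersection over all $n$ and all cubes $\q\in \cu^n(\mc{D}_1(\ab))$ of the sets $\{f\in\ns^{\ab}: f\co \q\in \cu^n(\ns)\}$, each closed because $\cu^n(\ns)$ is closed and evaluation along the finitely many points $\q(v)$ is continuous for the product topology. Your extra remarks (the explicit evaluation map $\pi_c$ and the observation that countability of $\ab$ is not needed for closedness) are accurate but not essential.
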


\begin{proof}
By definition of morphisms, we have $f\in \hom(\mc{D}_1(\ab),\ns)$ if and only if for every $\q\in \cu^n(\mc{D}_1(\ab))$ we have $f\co \q\in \cu^n(\ns)$. Thus we have
\[
\hom(\mc{D}_1(\ab),\ns) = \bigcap_{n=0}^\infty\; \bigcap_{\q\in \cu^n(\mc{D}_1(\ab))} \{f\in \ns^{\ab}: f\co \q \in \cu^n(\ns) \}.
\]
As the set $\cu^n(\ns)$ is closed, the set $\{f\in \ns^{\ab}: f\co \q \in \cu^n(\ns) \}$ is closed as well for any $\q$. Hence $\hom(\mc{D}_1(\ab),\ns)$ is closed.
\end{proof}

On this space $\hom(\mc{D}_1(\ab),\ns)$ we can define a natural action of $\ab$ by shifts, as follows.

\begin{defn}
 We define the following continuous action of $\ab$ on $\hom(\mc{D}_1(\ab),\ns)$:  for any $f\in \hom(\mc{D}_1(\ab),\ns)$ and $z\in \ab$, we let $(f,z)\mapsto f(\cdot+z)$, where $f(\cdot+z)$ denotes the map $x\mapsto f(x+z)$ in $\hom(\mc{D}_1(\ab),\ns)$.
\end{defn}
\noindent This construction has an important feature for what follows, namely, that if the $k$-step nilspace $\ns$ has a sufficiently simple structure (in particular, if it is an abelian group nilspace), then the dynamical system consisting of $\hom(\mc{D}_1(\ab),\ns)$ with the above $\ab$-action is a \emph{topological} Abramov system of order at most $k$. To detail this we first recall the latter notion in the definition below (from \cite[Definition 5.12]{CGSS-p-hom}). This uses the notation $C(\ns,\mb{C})$ for the algebra of continuous functions on a metric space $\ns$, and the notation $\Delta_g(f)$ for the multiplicative derivative of a function $f\in L^\infty(\ns)$ when $\ns$ is acted upon by a group $G$, defined by $\Delta_g f (x) := f (g\,x)\overline{f (x)}$ for any $g\in G, x\in \ns$ (in particular $\Delta_g f (x) = f (g+ x)\overline{f (x)}$ if $G$ is abelian).

\begin{defn}\label{def:top-abramov}
Let $Y$ be a compact metric space and let $\ab$ be a group acting on $Y$ by homeomorphisms. A continuous phase polynomial of degree at most $k$ is a  function $\phi\in C(Y,\mb{C})$ such that for every $y\in Y$ and $z_1,\ldots,z_{k+1}\in \ab$ we have $\Delta_{z_1}\cdots \Delta_{z_{k+1}} \phi(y) = 1$. We say that $(Y,\ab)$ is a \emph{topological Abramov system of order at most $k$} if the algebra of continuous phase polynomials of degree at most $k$ is dense in $C(Y,\mb{C})$.
\end{defn}

\begin{remark}
In particular, if $\mu$ is any Borel probability measure on $Y$ preserved by the action of $\ab$, then the above system $(Y,\ab)$ together with $\mu$ is an Abramov measure-preserving system of order at most $k$ (this is seen straighforwardly using the density of $C(Y,\mb{C})$ in $L^2(\nss)$).
\end{remark}

Let us now prove the feature announced above.

\begin{proposition}\label{prop:top-abr}
Let $G$ be a compact abelian group, let $G_\bullet$ be a filtration of degree $k$ on $G$, let $\ns$ be the associated group nilspace \textup{(}i.e.\ $\ns$ is $G$ equipped with the Host-Kra cubes associated with the filtration $G_\bullet$\textup{)}, and let $\ab$ be a countable discrete abelian group. Then the $\ab$-system  $\hom(\mc{D}_1(\ab),\ns)$ is a topological Abramov system of order at most $k$.
\end{proposition}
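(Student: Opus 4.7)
The plan is to apply the Stone--Weierstrass theorem to a concrete family of continuous phase polynomials built from characters of $G$. First, I would recall the standard fact that, since $\ns$ is the group nilspace attached to $(G,G_\bullet)$ of degree $k$, the set $\hom(\mc{D}_1(\ab),\ns)$ coincides with the set of polynomial maps $f:\ab\to G$ of degree at most $k$ with respect to $G_\bullet$ (in Leibman's sense). In particular, for every $z_1,\ldots,z_{k+1}\in \ab$ and every such $f$, the iterated discrete derivative $\partial_{z_1}\cdots\partial_{z_{k+1}}f$ is identically $0_G$, because $G_{k+1}=\{0_G\}$.

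For each $y\in\ab$ and each continuous character $\chi\in\wh{G}$, I would define
\[
\phi_{y,\chi}:\hom(\mc{D}_1(\ab),\ns)\to\mb{C},\qquad \phi_{y,\chi}(f):=\chi\bigl(f(y)\bigr).
\]
This is continuous because it factors through the evaluation map $f\mapsto f(y)$, which is continuous for the product topology. Using the shift action $(z\cdot f)(x)=f(x+z)$, a direct computation gives
\[
\Delta_z\phi_{y,\chi}(f)=\chi\bigl(f(y+z)\bigr)\overline{\chi\bigl(f(y)\bigr)}=\chi\bigl(\partial_zf(y)\bigr),
\]
and then by induction on $j$,
\[
\Delta_{z_1}\cdots\Delta_{z_j}\phi_{y,\chi}(f)=\chi\bigl(\partial_{z_1}\cdots\partial_{z_j}f(y)\bigr).
\]
Taking $j=k+1$ this equals $\chi(0_G)=1$, so $\phi_{y,\chi}$ is a continuous phase polynomial of degree at most $k$ in the sense of Definition~\ref{def:top-abramov}.

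Next I would invoke Stone--Weierstrass for the unital subalgebra $\mc{A}\subseteq C(\hom(\mc{D}_1(\ab),\ns),\mb{C})$ generated by the functions $\phi_{y,\chi}$ with $y\in\ab$ and $\chi\in\wh{G}$. This algebra contains the constant $1$ (trivial character), is closed under complex conjugation since $\overline{\phi_{y,\chi}}=\phi_{y,\overline{\chi}}$, and separates points: if $f\neq f'$ in $\hom(\mc{D}_1(\ab),\ns)$ then $f(y)\neq f'(y)$ for some $y\in\ab$, and Pontryagin duality for the compact abelian group $G$ supplies $\chi\in\wh{G}$ with $\chi(f(y))\neq\chi(f'(y))$. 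Hence $\mc{A}$ is dense in $C(\hom(\mc{D}_1(\ab),\ns),\mb{C})$.

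To conclude, I would verify that every element of $\mc{A}$ lies in the algebra of continuous phase polynomials of degree at most $k$. The identity $\Delta_z(\phi\psi)(f)=\phi(z\cdot f)\psi(z\cdot f)\overline{\phi(f)}\,\overline{\psi(f)}=\Delta_z\phi(f)\cdot\Delta_z\psi(f)$ shows that $\Delta_z$ is multiplicative, and iterating this gives that products of phase polynomials of degree at most $k$ are themselves phase polynomials of degree at most $k$; closure under $\mb{C}$-linear combinations is automatic in the phase-polynomial algebra. Thus $\mc{A}$ is a subalgebra of the algebra of continuous phase polynomials of degree at most $k$, and its density finishes the proof. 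There is no deep obstacle here: the substantive content is the derivative calculation pinned to the polynomial-map characterisation of morphisms into a group nilspace, and the multiplicativity check for $\Delta_z$; both are straightforward, and Stone--Weierstrass does the rest.
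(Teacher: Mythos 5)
Your proposal is correct and follows essentially the same route as the paper: the same test functions $f\mapsto\chi(f(y))$, the same Stone--Weierstrass argument via point separation from Pontryagin duality, and the same derivative computation. The only cosmetic difference is that you close the computation by invoking the Leibman-polynomial characterisation of $\hom(\mc{D}_1(\ab),\ns)$ (so the $(k{+}1)$-fold derivative of $f$ vanishes), whereas the paper deduces the same identity directly from the cube condition $f\co\q\in\cu^{k+1}(\ns)$ and the alternating-sum property of top-dimensional Host--Kra cubes.
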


\begin{proof}
Abusing the notation, let us denote the nilspace $\ns$ simply by $G$. For any $\chi\in \widehat{G}$ and $z\in \ab$ let $h_{z,\chi}:\hom(\mc{D}_1(\ab),G) \to \mb{C}$ be the map $f\mapsto \chi(f(z))$. Letting $\nss:=\hom(\mc{D}_1(\ab),G)$, we claim that the algebra generated by these functions $h_{z,\chi}$ separates the points of $\nss$ and vanishes nowhere. Indeed, if $f,g$ are distinct points in $\nss$ then there exists $z\in \ab$ such that $f(z)\not=g(z)$, and then by standard Fourier analysis there exists $\chi\in \widehat{G}$ such that $\chi(f(z))\neq \chi(g(z))$ (the algebra clearly vanishes nowhere since any $h_{z,\chi}$ takes values in the unit circle). Our claim follows, whence the algebra generated by the functions $h_{z,\chi}$ is dense in $C(\nss,\mb{C})$, by the Stone-Weierstrass theorem.

Next we show that every map $h_{z,\chi}$ is a polynomial phase function of degree at most $k$. Thus, we want to prove that for any  $z_1,\ldots,z_{k+1}\in \ab$ and any $f\in \hom(\mc{D}_1(\ab),G)$ we have $\Delta_{z_{k+1}}\cdots\Delta_{z_1}h_{z,\chi}(f)=1$.  For any cube $\q\in \cu^{k+1}(\mc{D}_1(\ab))$ defined as $\q(v_1,\ldots,v_{k+1}):=z+v_1z_1+\cdots+v_{k+1}z_{k+1}$, we have $\Delta_{z_{k+1}}\cdots\Delta_{z_1}h_{z,\chi}(f) = \prod_{v\in \db{k+1}} \mc{C}^{|v|}\chi(f\co \q)(v)$, where $|v|=v_1+\cdots+v_{k+1}$ and $\mc{C}$ denotes the complex-conjugation operation (thus $\mc{C}^{|v|}$ is the complex conjugation if $|v|$ is odd and the identity if $|v|$ is even). Now, since  $f\co\q\in \cu^{k+1}(\ns)$,  we know that $\sum_{v\in \db{k+1}} (-1)^{|v|}(f\co \q)(v)=0$ by \cite[Proposition 2.2.28]{Cand:Notes1}. Composing with $\chi$ we deduce that $\Delta_{z_{k+1}}\cdots\Delta_{z_1}h_{z,\chi}(f)=1$ as required.
\end{proof}

\noindent Next we observe additional useful properties of the system $\hom(\mc{D}_1(\ab),\ns)$.

We begin with distality. Recall that a topological dynamical system $(Y,G)$ (where $Y$ is a compact metric space and $G$ is a countable discrete group acting on $Y$ by homeomorphisms) is said to be  \emph{distal} if no two distinct points $x \neq y$ in $Y$ are \emph{proximal} (where $x, y$ are said to be proximal if there is a sequence $(g_n)_{n\in \mb{N}}$ in $G$ such that $\lim_{n \to \infty} d(g_n\cdot x, g_n\cdot y) = 0$). Using the compactness of $Y$ to pass to convergent subsequences, it is readily seen that $(Y,G)$ is distal if and only if for every $x,y\in Y$ and every sequence $(g_n)_{n\in \mb{N}}$ in $G$, if   $g_n\cdot x$ and $g_n\cdot y$ both converge to the same limit as $n\to\infty$, then $x=y$.

\begin{lemma}\label{lem:hom-are-distal}
Let $\ab$ be a discrete countable abelian group, let $(G,G_\bullet)$ be a filtered compact abelian group of degree at most $k$, and let $\ns$ be the associated group nilspace. Then the $\ab$-system $\hom(\mc{D}_1(\ab),\ns)$ is distal.
\end{lemma}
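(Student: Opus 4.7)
The plan is to exploit the fact that, since $\ns$ is an abelian group nilspace, the set $\hom(\mc{D}_1(\ab),\ns)$ is itself a compact abelian group under pointwise addition (being a closed subgroup of $G^\ab$), and the shift action of $\ab$ is by continuous group automorphisms. Because of this group structure, distality reduces to a single statement: if $p\in \hom(\mc{D}_1(\ab),\ns)$ and $p(\,\cdot\,+z_n)\to 0$ pointwise for some sequence $(z_n)\subseteq \ab$, then $p\equiv 0$. Indeed, if $f,g\in \hom(\mc{D}_1(\ab),\ns)$ and $f(\,\cdot\,+z_n)$ and $g(\,\cdot\,+z_n)$ converge to the same limit $h$, then applying the reduced statement to $p:=f-g$ (still a morphism) yields $f=g$.

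Next I would invoke the standard description of morphisms from $\mc{D}_1(\ab)$ into the group nilspace associated with $(G,G_\bullet)$: these are exactly the \emph{polynomial maps} $\ab\to G$ of degree at most $k$ with respect to $G_\bullet$, equivalently maps annihilated by every $(k+1)$-fold iterated additive derivative $\Delta_{h_1}\cdots\Delta_{h_{k+1}}$ (here $\Delta_h p(x):=p(x+h)-p(x)$). In particular, each iterated derivative $\Delta_{h_1}\cdots\Delta_{h_j}p$ is a polynomial of degree at most $k-j$, and for $j=k$ it is a constant.

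The core of the proof is then an induction which successively lowers the degree of $p$. Assume $p$ is polynomial of degree at most $d\leq k$ and that $p(\,\cdot\,+z_n)\to 0$ pointwise. For any $h_1,\dots,h_d\in \ab$, the polynomial $\Delta_{h_1}\cdots\Delta_{h_d}p$ has degree at most $0$, so it is a constant $c(h_1,\dots,h_d)\in G$. Evaluating at $z_n$, this constant equals the $\mb{Z}$-linear combination $\sum_{I\subseteq[d]}(-1)^{d-|I|}p\bigl(z_n+\sum_{i\in I}h_i\bigr)$, each of whose terms tends to $0$ in $G$; hence the constant vanishes. So $p$ has degree at most $d-1$. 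Iterating down, $p$ must be the zero map, yielding the reduced statement and thus distality.

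The proof is essentially routine given the right setup; the only conceptual point to get right is the passage from the dynamical hypothesis (pointwise convergence to a common limit) to the statement about the single polynomial $p=f-g$, and then the observation that a constant polynomial with a subsequential limit equal to $0$ must be $0$. No serious obstacle is expected beyond correctly citing the characterization of morphisms into a compact abelian group nilspace as degree-$\leq k$ polynomial maps.
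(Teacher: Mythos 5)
Your proof is correct and follows essentially the same route as the paper's: reduce distality, via the group structure of $\hom(\mc{D}_1(\ab),\ns)$, to showing that a morphism $p$ with $p(\cdot+z_n)\to 0$ must vanish, then run a descending induction on the degree using the fact that a top-order iterated derivative is a constant which, being an alternating sum of terms tending to $0$, must be $0$. One minor imprecision: for a general filtration $G_\bullet$, vanishing of all $(k+1)$-fold derivatives is necessary but not \emph{equivalent} to being a morphism (the intermediate conditions $\partial_{h_j}\cdots\partial_{h_1}p\in G_{(j)}$ also matter); your argument only uses the necessary direction, so nothing breaks.
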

\noindent We shall use the following notation: for abelian groups $G$, $\ab$ and any $f\in \hom(\mc{D}_1(\ab),G)$, for any $x,g\in \ab$ we let $\partial_g f(x):=f(x+g)-f(x)$.
\begin{proof}
Again we abuse notation, writing $G$ instead of $\ns$. It suffices to prove that if for some $f\in \hom(\mc{D}_1(\ab),G)$ and some sequence $(z_n)$ in $\ab$ we have $\lim_{n\to\infty}z_n(f)=\lim_{n\to\infty} f(\cdot+z_n)=0$ (here $0$ is the constant zero function in $\hom(\mc{D}_1(\ab),G)$) then $f=0$. 

First, note that for every $g_0,\ldots,g_{k},x\in \ab$ we have $\partial_{g_0}\cdots \partial_{g_k} f(x+z_n) = 0$ for every $n\in \mb{N}$. In particular this implies that $\partial_{g_1}\cdots \partial_{g_k} f(x+z_n) = c$ for some constant $c\in G$ independent of $z_n,x\in \ab$. On the other hand, by assumption $f(x+z_n)\to 0$ as $n\to\infty$, so we must have $c=0$. Hence for every $g_1,\ldots,g_k\in \ab$ we have $\partial_{g_1}\cdots \partial_{g_k} f(x+z_n) =0$. 

We can repeat the previous argument inductively to deduce that then $\partial_{g_2}\cdots \partial_{g_k} f(x+z_n) =0$, then $\partial_{g_3}\cdots \partial_{g_k} f(x+z_n) =0$, and so on, deducing finally that $f=0$.
\end{proof}

\noindent We will be particularly interested in single orbit closures inside $\hom(\mc{D}_1(\ab),\ns)$. 

\begin{corollary}
Let $\ab$ be a discrete countable abelian group, let $(G,G_\bullet)$ be a filtered compact abelian group of degree at most $k$, and let $\ns$ be the associated group nilspace. Let $f\in \hom(\mc{D}_1(\ab),\ns)$, and let  $\overline{\orb(f)}:=\overline{\{f(\cdot+g):g\in \ab\}}$ be the closure of the orbit of $f$. Then $(\overline{\orb(f)},\ab)$ is a minimal distal dynamical system. 
\end{corollary}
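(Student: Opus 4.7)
The plan for proving the corollary splits along the two assertions: distality of $(\overline{\orb(f)},\ab)$ and its minimality.

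For distality, I would observe that $\overline{\orb(f)}$ is by construction a closed $\ab$-invariant subset of the ambient system $\hom(\mc{D}_1(\ab),\ns)$, and that distality trivially passes to closed invariant subsystems: if two distinct points of $\overline{\orb(f)}$ were proximal under the restricted action, they would still be proximal inside the ambient system, contradicting Lemma \ref{lem:hom-are-distal}. So the only content here is a direct appeal to that lemma.

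For minimality, I would invoke the classical fact that in a distal topological dynamical system on a compact metric space, the orbit closure of every point is automatically a minimal subsystem. The cleanest route is via Ellis's theorem: the enveloping semigroup $E$ of the ambient system $\hom(\mc{D}_1(\ab),\ns)$ is a group precisely because the system is distal, and for each $y$ one has $\overline{\orb(y)}=E\cdot y$. Thus if $g\in \overline{\orb(f)}=E\cdot f$ and we write $g=e\cdot f$ for some $e\in E$, then $f=e^{-1}\cdot g\in E\cdot g=\overline{\orb(g)}$. This forces $\overline{\orb(f)}\subseteq \overline{\orb(g)}$; the reverse inclusion is automatic because $\overline{\orb(f)}$ is closed and $\ab$-invariant and contains $g$. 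Hence every orbit inside $\overline{\orb(f)}$ is dense, which is minimality.

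Neither step is a real obstacle: Lemma \ref{lem:hom-are-distal} already did the substantive work of establishing distality of the ambient system, and the implication ``distal $\Rightarrow$ every orbit closure is minimal" is a standard fact in topological dynamics. If one prefers to avoid invoking the Ellis semigroup, one can instead use Zorn's lemma to pick a minimal subsystem $M\subseteq \overline{\orb(f)}$, pick $y\in M$, approximate $y$ by some net $(z_\alpha+f)$, pass to a subnet so that $-z_\alpha+y\to y'\in M$, and then apply distality to the pair $(f,y)$ to deduce $y'=f$, hence $f\in M$; this yields the same conclusion by essentially the same ideas.
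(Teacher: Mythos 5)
Your proposal is correct, and it reaches the same conclusion as the paper by a genuinely different (though classically equivalent) route for the minimality step. The distality part is handled identically in spirit: the paper also relies solely on Lemma \ref{lem:hom-are-distal}, since $\overline{\orb(f)}$ is a closed invariant subset of the ambient distal system. For minimality, the paper cites Furstenberg's theorem that every point is proximal to a uniformly recurrent point; distality then forces $f$ itself to be uniformly recurrent, and uniformly recurrent points have minimal orbit closures. You instead invoke Ellis's characterization (distal $\Leftrightarrow$ the enveloping semigroup $E$ is a group) together with $\overline{\orb(y)}=E\cdot y$, which gives the same standard fact ``distal $\Rightarrow$ every orbit closure is minimal'' with a clean algebraic argument. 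Both routes are textbook; the paper's has the advantage of citing ready-made references directly, while yours is more self-contained modulo Ellis's theorem. One small caveat: your Zorn's-lemma alternative at the end is the least precise part --- the step where you pass to a subnet with $-z_\alpha+y\to y'$ and ``apply distality to the pair $(f,y)$ to deduce $y'=f$'' does not quite work as stated (what one actually shows is that $f$ is proximal to some point of the minimal set $M$, which is essentially re-proving the Auslander--Furstenberg result the paper cites); but since your primary Ellis-semigroup argument is complete, this is inessential.
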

\begin{proof}
The point $f$ is proximal to a uniformly recurrent point \cite[Theorem 8.7]{Furstenberg}. Since the system is distal, the point $f$ is itself uniformly recurrent \cite[Corollary]{Furstenberg}. This then implies that the orbit closure of $f$ is minimal (see \cite[Proposition 2.4.5]{Hass&Katok}).  
\end{proof}
\noindent Using this, we can apply nilspace-theoretic results in topological dynamics from \cite{GGY} to prove that in fact $(\overline{\orb(f)},\ab)$ is a $k$-step nilspace system.

\begin{theorem}\label{thm:poly-gen-min-dis-ilspace-systems}
Let $\ab$ be a discrete countable abelian group, let $(G,G_\bullet)$ be a filtered compact abelian group of degree at most $k$, and let $\ns$ be the associated group nilspace. Then for every $f\in \hom(\mc{D}_1(\ab),\ns)$, the system $(\overline{\orb(f)},\ab)$ is a minimal distal nilspace system of order at most $k$, and an Abramov system of order at most $k$.
\end{theorem}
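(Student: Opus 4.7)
The plan is to establish the three claims---minimality and distality, Abramov property of order at most $k$, and nilspace-system structure of order at most $k$---by combining the earlier results in the paper with nilspace-theoretic tools from \cite{GGY}.

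Minimality and distality of $(\overline{\orb(f)},\ab)$ are already provided by the corollary immediately preceding this theorem, whose proof combined Lemma \ref{lem:hom-are-distal} with Furstenberg's classical results on proximality and uniform recurrence. For the Abramov property, I would argue by restriction from the ambient system. By Proposition \ref{prop:top-abr} the system $\hom(\mc{D}_1(\ab),\ns)$ is topologically Abramov of order at most $k$, and its proof provides an explicit generating family of continuous phase polynomials $h_{z,\chi}$ indexed by $z\in\ab$ and $\chi\in\widehat{G}$. The defining $(k{+}1)$-fold multiplicative derivative identity for any such $h_{z,\chi}$ holds on the whole of $\hom(\mc{D}_1(\ab),\ns)$, so it holds a fortiori on the closed $\ab$-invariant subset $\overline{\orb(f)}$; the restriction $h_{z,\chi}|_{\overline{\orb(f)}}$ is therefore again a continuous phase polynomial of degree at most $k$. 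The same Pontryagin-duality argument used in Proposition \ref{prop:top-abr} shows that these restrictions separate points of $\overline{\orb(f)}$ and are nowhere vanishing, so by the Stone--Weierstrass theorem the algebra they generate is dense in $C(\overline{\orb(f)},\mb{C})$. This gives the topological Abramov property of order at most $k$, and hence also the measure-preserving Abramov property with respect to any invariant Borel probability measure via the Remark following Definition \ref{def:top-abramov}.

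The main substantive step is to equip $(\overline{\orb(f)},\ab)$ with the structure of a $k$-step nilspace system, and for this I would invoke the nilspace-theoretic topological dynamics results of \cite{GGY}. Viewing $f$ as a nilspace polynomial from $\mc{D}_1(\ab)$ into the $k$-step abelian-group nilspace $\ns$, its shift orbit inside $\hom(\mc{D}_1(\ab),\ns)\subset\ns^{\ab}$ inherits a cubic structure from the product cubic structure on $\ns^{\ab}$, relative to which $\ab$ acts by nilspace automorphisms via shifts. The relevant theorems in \cite{GGY} characterize the closures of such orbits of nilspace polynomials as nilspace systems; applying them here promotes $\overline{\orb(f)}$ to a $k$-step nilspace system. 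The main obstacle I anticipate is locating and verifying the precise statement in \cite{GGY} that covers the present setting of a general countable discrete abelian group $\ab$ (rather than just $\mb{Z}$), and confirming the degree bound---that the inherited nilspace structure on $\overline{\orb(f)}$ has order at most $k$ rather than higher---which should follow from the fact that the filtration $G_\bullet$ has degree at most $k$, together with the alternating-sum identity on cubes of $\cu^{k+1}(\ns)$ already exploited in Proposition \ref{prop:top-abr}.
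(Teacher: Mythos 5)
Your treatment of minimality, distality and the Abramov property matches the paper's proof: the first two are indeed quoted from the corollary preceding the theorem, and the Abramov claim is obtained exactly as you describe, by restricting the functions $h_{z,\chi}$ from Proposition \ref{prop:top-abr} to $\overline{\orb(f)}$, noting that they still separate points and vanish nowhere, and reapplying Stone--Weierstrass.

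The gap is in the part you yourself flag as the main obstacle, and it is not merely a matter of locating a citation. The result the paper invokes is \cite[Theorem 7.14]{GGY}, which promotes a \emph{minimal distal} $\ab$-system to a nilspace system of order at most $k$ \emph{provided} its regionally proximal relation of order $k$ (in the sense of \cite{Song-Ye}) is trivial. So the substantive step is not to read off a cubic structure inherited from the product cubes on $\ns^{\ab}$ --- GGY builds the nilspace structure out of the dynamics of a system of order $k$, not out of an ambient cube structure --- but to verify that $\mathrm{RP}^{[k]}$ is trivial on $\overline{\orb(f)}$. Concretely: given $g^{(n)}=(g_1^{(n)},\dots,g_{k+1}^{(n)})\in\ab^{k+1}$ such that $f(x+v\cdot g^{(n)})\to f(x)$ for every $v\in\db{k+1}\setminus\{1^{k+1}\}$, one must show $f(x+1^{k+1}\cdot g^{(n)})\to f(x)$. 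This follows from the identity $\partial_{g_1^{(n)}}\cdots\partial_{g_{k+1}^{(n)}}f(x)=0$ (valid because $G_\bullet$ has degree at most $k$), which lets one write
$f(x+1^{k+1}\cdot g^{(n)})=(-1)^k\sum_{v\in\db{k+1}\setminus\{1^{k+1}\}}(-1)^{|v|}f(x+v\cdot g^{(n)})$
and pass to the limit. You do name the right identity (the alternating sum over vertices of a $\cu^{k+1}$-cube), but you assign it the role of ``confirming the degree bound'' of an already-existing inherited structure, rather than the role it must actually play: verifying the hypothesis under which \cite[Theorem 7.14]{GGY} applies at all. Without that verification, the appeal to GGY does not go through, so the nilspace-system part of the statement remains unproved in your proposal.
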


\begin{proof}
By \cite[Theorem 7.14]{GGY} it suffices to prove that $(\overline{\orb(f)},\ab)$ is a system of order $k$, i.e.\ that the regionally proximal relation of order $k$ (see \cite{Song-Ye}) is trivial. In our case this means proving the following: given any sequence $g^{(n)}:=(g_1^{(n)},\ldots,g_{k+1}^{(n)}) \in \ab^{k+1}$, if for every $v\in \db{k+1}\setminus\{1^{k+1}\}$ we have $f(x+v\cdot g^{(n)})\to f(x)$ as $n\to \infty$ (where $v\cdot g^{(n)}=v_1g^{(n)}_1+\cdots+v_{k+1}g^{(n)}_{k+1}$), then $f(x+1^{k+1}\cdot g^{(n)})\to f(x)$ as $n\to\infty$ (where $1^{k+1}\in \db{k+1}$ is the element with all coordinates equal to 1). This follows from the fact that  $\partial_{g_1^{(n)}}\cdots \partial_{g_{k+1}^{(n)}} f(x) = 0$ for every $x\in \ab$ and $n\in \mb{N}$. Indeed, this fact implies that for every $n$ we have $f(x+1^{k+1}\cdot g^{(n)}) = (-1)^k\sum_{v\in \db{k+1}\setminus\{1^{k+1}\}}(-1)^{|v|} f(x+v\cdot g^{(n)})$ (where $|v|:=\sum_{i\in [k+1]}v_i$). Letting $n\to\infty$, all terms in the right side in the previous equation converge to $f(x)$, whence $f(x+1^{k+1}\cdot g^{(n)}) \to   (-1)^k \sum_{v\in \db{k+1}\setminus\{1^{k+1}\}} (-1)^{|v|}f(x) = (-1)^{2k}f(x)=f(x)$, as required. 

Finally, the claim that $(\overline{\orb(f)},\ab)$ is topological Abramov of order at most $k$ follows from (the proof of) Proposition \ref{prop:top-abr}. In fact, that proposition shows that $\hom(\mc{D}_1(\ab),\ns)$ is topological Abramov of order $k$ by constructing  functions $h_{z,\chi}$ that separate the points and vanish nowhere, but then the restrictions $h_{z,\chi}|_{\overline{\orb(f)}}$ also separate the points and vanish nowhere on $\overline{\orb(f)}$. The claim then follows by the Stone-Weierstrass theorem.
\end{proof}

\section{\texorpdfstring{Transitive topological Abramov $\mb{F}_p^\omega$-systems as  single orbit closures in $\hom(\mc{D}_1(\mb{F}_p^\omega),\mc{H}_{p,k})$}{Transitive topological Abramov Fp⍵-systems as single orbit closures in hom(D1(Fp⍵),$H_{p,k})$}
}\label{sec:CharSec}

\noindent In the previous section we identified some useful properties of dynamical systems consisting of single orbit closures of morphisms into an abelian nilspace $\ns$. In this section we focus on the case where $\ns$ is the $p$-homogeneous nilspace $\mc{H}_{p,k}$, which already played an important role in previous work (e.g.\ in \cite{CGSS-aff-ex}), and whose definition is recalled below. This special case is of interest in that, as we shall prove in this section, it turns out that every topological Abramov $\mb{F}_p^\omega$-system is the closure of a single orbit in $\hom(\mc{D}_1(\mb{F}_p^\omega),\mc{H}_{p,k})$ (see Theorem \ref{thm:AbramovChar} below), which yields a pleasant description of these Abramov systems. To explain this we begin by recalling the definition of these $p$-homogeneous nilspaces $\mc{H}_{p,k}$ from \cite[Definition 4.2]{CGSS-aff-ex}. 

\begin{defn}
For any prime $p\in \mb{N}$, let $\mf{Z}_p$ denote the group of $p$-adic integers. For $\ell\in\mb{N}$ we denote by $\abph^{(p)}_{\infty,\ell}$ the compact group-nilspace associated with the filtered group  $\big(G, G_\bullet^{(\ell)}\big)$ where $G= \mf{Z}_p$ and the filtration $G_\bullet^{(\ell)}=(G_{(i)}^{(\ell)})_{i=0}^{\infty}$ consists of $G_{(i)}^{(\ell)}= \mf{Z}_p$ for $i\in[0,\ell]$ and $G_{(i)}^{(\ell)}=p^{\lfloor\frac{i-{\ell}-1}{p-1}\rfloor+1}\mf{Z}_p$ for $i>\ell$. We define $\mc{H}_p$ as the product nilspace $\prod_{\ell=1}^{\infty}(\abph^{(p)}_{\infty,\ell})^{\mb{N}}$. We denote its $k$-th order factor by $\mc{H}_{p,k}$.
\end{defn}

\noindent The nilspace $\mc{H}_{p,k}$ can also be described in terms of products of finite nilspaces. Indeed, let $\ell$ be a positive integer at most $k$, let $G$ be the cyclic group of order $p^{\lfloor \frac{k-{\ell}}{p-1}\rfloor+1}$, and let us equip this with  the filtration $G_\bullet=(G_{(i)})_{i=0}^{\infty}$ with $i$-th term $G_{(i)}=G$ for $i\in [0,\ell]$, and $G_{(i)}:=p^{\lfloor\frac{i-{\ell}-1}{p-1}\rfloor+1}G=\{p^{\lfloor\frac{i-{\ell}-1}{p-1}\rfloor+1}x:x\in G\}$ for $i>\ell$. The nilspace $\abph_{k,{\ell}}^{(p)}$ (see \cite[Definition 1.6]{CGSS-p-hom}) is then the group nilspace associated with the filtered group $(G,G_\bullet)$. In particular, $\mc{H}_{p,k}$ can be shown to be equal to\footnote{ Note that each term $\abph^{(p)}_{\infty,\ell}$ is a group nilspace $(G,G_\bullet^{(\ell)})$ and thus, its $k$-th order factor equals $G/G_{(k)}^{(\ell)}$. }  $\prod_{\ell=1}^{k}(\abph^{(p)}_{k,\ell})^{\mb{N}}$.

The main result of this section is then the following.

\begin{theorem}\label{thm:AbramovChar} 
Let $\ns$ be a compact second-countable topological space acted upon continuously by $\mb{F}_p^\omega$. The following statements are equivalent:
\setlength{\leftmargini}{0.8cm}
\begin{enumerate}
    \item The $\mb{F}_p^\omega$-system $\ns$ is a transitive topological Abramov system of order at most $k$.
    \item There exists a morphism \textup{(}i.e.\ a polynomial\textup{)} $f\in \hom(\mc{D}_1(\mb{F}_p^\omega),\mc{H}_{p,k})$ such that $\ns$ is conjugate \textup{(}i.e.\ isomorphic as a topological dynamical system\textup{)} to $(\overline{\orb(f)},\mb{F}_p^\omega)$.
\end{enumerate}
\end{theorem}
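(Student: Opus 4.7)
The direction (ii) $\Rightarrow$ (i) should be immediate from Theorem \ref{thm:poly-gen-min-dis-ilspace-systems}: since $\mc{H}_{p,k}$ is an abelian group nilspace of degree at most $k$, any orbit closure $(\overline{\orb(f)},\mb{F}_p^\omega)$ inside $\hom(\mc{D}_1(\mb{F}_p^\omega),\mc{H}_{p,k})$ is minimal (hence transitive), distal, and topological Abramov of order at most $k$. So the real content lies in (i) $\Rightarrow$ (ii).

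For (i) $\Rightarrow$ (ii), my plan is to fix $x_0\in\ns$ with dense orbit and build $f$ from the phase polynomial data. Since $\ns$ is second-countable compact Hausdorff, $C(\ns,\mb{C})$ is separable; combining this with the topological Abramov hypothesis, I would extract a countable family $\{\phi_n\}_{n\in\mb{N}}$ of continuous phase polynomials of degree at most $k$ that is dense in $C(\ns,\mb{C})$ and in particular separates points of $\ns$. Composing with the orbit map $\alpha\colon z\mapsto z\cdot x_0$, each $\psi_n:=\phi_n\circ\alpha$ is a phase polynomial of degree at most $k$ on $\mc{D}_1(\mb{F}_p^\omega)$. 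The key technical input is the universal role of $\mc{H}_{p,k}$ (from \cite{CGSS-aff-ex}, in the spirit of its Theorem 4.3), which I would invoke to factor each $\psi_n$ as $\chi_n\circ g_n$ for some $g_n\in \hom(\mc{D}_1(\mb{F}_p^\omega),\mc{H}_{p,k})$ and $\chi_n\in\widehat{\mc{H}_{p,k}}$. I then assemble $f:=(g_n)_{n\in\mb{N}}\colon \mc{D}_1(\mb{F}_p^\omega)\to\mc{H}_{p,k}^{\mb{N}}$, using that $\mc{H}_{p,k}^{\mb{N}}\cong\mc{H}_{p,k}$ as nilspaces since the defining product $\prod_{\ell=1}^{k}(\abph^{(p)}_{k,\ell})^{\mb{N}}$ already absorbs another factor of $\mb{N}$ via $\mb{N}\times\mb{N}\cong\mb{N}$.

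With $f$ in hand, I would define $\pi\colon \orb(f)\to\orb(x_0)$ by $f(\cdot+z)\mapsto z\cdot x_0$ and extend continuously. Well-definedness follows since $f(\cdot+z_1)=f(\cdot+z_2)$ implies $\chi_n(g_n(z_1))=\chi_n(g_n(z_2))$ for every $n$, hence $\phi_n(z_1\cdot x_0)=\phi_n(z_2\cdot x_0)$ for every $n$, and the family $\{\phi_n\}$ separates points. For continuous extension, if $f(\cdot+z_\alpha)\to f^*$ in $\overline{\orb(f)}$ (i.e., pointwise in $\mc{H}_{p,k}^{\mb{N}}$), then for each $n$ the scalars $\phi_n(z_\alpha\cdot x_0)=\chi_n(g_n(z_\alpha))$ converge to $\chi_n(f^*_n(0))$; by compactness of $\ns$ and separation by $\{\phi_n\}$, all subnet limits of $(z_\alpha\cdot x_0)$ must coincide with a single $y\in\ns$, so we set $\bar\pi(f^*)=y$. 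Surjectivity is immediate from denseness of $\orb(x_0)$ and compactness, and equivariance extends from the orbit by continuity.

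The main obstacle is injectivity of $\bar\pi$: from $\bar\pi(f_1^*)=\bar\pi(f_2^*)$ the above construction only gives $\chi_n(f^*_{1,n}(w))=\chi_n(f^*_{2,n}(w))$ for every $n$ and $w$, which is strictly weaker than $f^*_1=f^*_2$ in $\mc{H}_{p,k}^{\mb{N}}$ unless the family $\{\chi_n\}$, distributed across coordinates, separates points of $\mc{H}_{p,k}$. To overcome this I would enlarge the construction so that the family of pairs $\{(g_n,\chi_n)\}$ exhausts $\widehat{\mc{H}_{p,k}}$ per coordinate: this is feasible because $\mc{H}_{p,k}$ is a compact metrizable abelian group nilspace, whence $\widehat{\mc{H}_{p,k}}$ is countable. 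Concretely, I would include in $\{\phi_n\}$ (after enlarging the algebra) the continuous functions on $\ns$ that correspond to each character $\chi\circ g_m$ which does descend through $\alpha$, and arrange the morphisms $g_n$ so that the algebra generated by the restrictions of all coordinate characters $h_{z,\chi}$ to $\overline{\orb(f)}$ matches, via Stone--Weierstrass, the algebra generated by the $\phi_n$ on $\ns$. This matching of the two character systems is the technical heart, and once it is secured the conjugacy $\bar\pi$ follows.
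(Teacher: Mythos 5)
Your direction (ii)$\Rightarrow$(i) matches the paper exactly. For (i)$\Rightarrow$(ii) your overall strategy (separability of $C(\ns,\mb{C})$ plus the Abramov hypothesis to get a countable point-separating family of continuous phase polynomials, evaluation along the orbit of a transitive point, and the structure theory of $\mb{F}_p^\omega$-polynomials to land in $\mc{H}_{p,k}$) is the same as the paper's, but there is a genuine gap at the step you yourself flag. By factoring each $\psi_n$ as $\chi_n\co g_n$ with $\chi_n$ an abstract (possibly non-faithful) character of $\mc{H}_{p,k}$, you discard information: the orbit closure of $f=(g_n)_n$ is then only guaranteed to be an \emph{extension} of $\ns$, not conjugate to it, since nothing pins down $g_n$ beyond its composition with $\chi_n$. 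Your proposed repair --- enlarging the family so that ``the algebra generated by the restrictions of all coordinate characters to $\overline{\orb(f)}$ matches the algebra generated by the $\phi_n$ on $\ns$'' --- is precisely the statement that $\bar\pi$ is injective, so as written it assumes what needs to be proved; it is not clear how to arrange it for an arbitrary choice of the $g_n$.

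The paper sidesteps this in two ways. First, it does not introduce the $g_n$ and $\chi_n$ at all: it takes $f(z):=(\phi_i(z\cdot x_0))_{i\in\mb{N}}\in\mb{T}^{\mb{N}}$, so the coordinates of $f$ \emph{are} the (circle-valued) phase polynomials, with no loss through a kernel; only at the very end does it invoke \cite[Proposition E.3]{CGSS-aff-ex} to show that each coordinate, after subtracting a constant, already takes values in a finite subgroup $\abph_{k,\ell}\subset\mb{T}$, so that $f$ lands in $\mc{H}_{p,k}$ viewed inside $\mb{T}^{\mb{N}}$. Second, it builds the map in the opposite direction to yours: $\theta\colon x\mapsto(z\mapsto(\phi_i(z\cdot x))_{i})$ from $\orb(x_0)$ to $\orb(f)$. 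In that direction injectivity is immediate from the fact that the $\phi_i$ separate points of $\ns$, continuity is clear, and surjectivity onto $\overline{\orb(f)}$ follows from compactness of $\overline{\orb(x_0)}$; a continuous equivariant bijection between compact Hausdorff systems is then a conjugacy. If you replace your $\bar\pi$ by this forward map $\theta$ and take the $g_n$ to be the $\mb{T}$-valued polynomials themselves (with their images constrained to $\abph_{k,\ell}$ by the cited proposition), your argument closes and coincides with the paper's.
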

\begin{proof}
$(ii)\Rightarrow (i)$: this follows from Theorem \ref{thm:poly-gen-min-dis-ilspace-systems} applied with $\ab=\mb{F}_p^\omega$ (in fact by previous results we know that here we obtain more than $(i)$, namely we obtain that $\ns$ is then minimal and distal.)

$(i)\Rightarrow (ii)$: by the assumption and the separability of $C(\ns,\mb{C})$ (see \cite[Section 12.3 page 251]{Royden-Fitzpatrick}), there is a \emph{sequence} $(g_i)_{i\in \mb{N}}$ of (continuous) polynomial phase functions whose finite linear combinations form a dense set in $C(\ns,\mb{C})$. For each $i$ let $\phi_i:\ns\to \mb{T}$ be a continuous $\mb{F}_p^\omega$-polynomial map such that $g_i(x)=e(\phi_i(x))$. This implies that there is a continuous function $F:\mb{T}^\mb{N}\to \mb{C}^\mb{N}$ (whose components consist in those linear combinations) such that for every $h\in C(\ns,\mb{C})$ and $\epsilon>0$ there is $N\in\mb{N}$ such that $|h(x)-F((\phi_i(x))_{i\in\mb{N}})_N|<\epsilon$ for all $x\in\ns$.

By transitivity, there exists $x_0\in \ns$ such that $\ns$ is isomorphic (as a dynamical system) to the orbit closure $\overline{\orb(x_0)}$.  Consider $(\phi_i(x_0))_{i\in \mb{N}}\in \mb{T}^\mb{N}$, and note that the map $f:\mb{F}_p^\omega\to \mb{T}^\mb{N}$ defined by $z\mapsto (\phi_i(z\cdot x_0))_{i\in \mb{N}}$ is a morphism in $\hom(\mc{D}_1(\mb{F}_p^\omega),\mc{D}_k(\mb{T}^\mb{N}))$. We let $\mb{F}_p^\omega$ act on $f$ as a single point in the subshift $\hom(\mc{D}_1(\mb{F}_p^\omega),\mc{D}_k(\mb{T}^\mb{N}))$, and consider the corresponding orbit $\orb(f)$. Let $\theta:\orb(x_0)\to \orb(f)$ be the map $x\mapsto (z\mapsto (\phi_i(z\cdot x))_{i\in\mb{N}}$. By definition of $f$, the map $\theta$ is $\mb{F}_p^\omega$-equivariant. This map is also injective, for if $x\neq x'$ in $\orb(x_0)$, then there is $h\in C(\ns,\mb{C})$ with $h(x)\neq h(x')$ (by Urysohn's lemma \cite[Theorem 33.1]{Munkres}) and therefore there is $N$ such that $F((\phi_i(x))_{i\in\mb{N}})_N\neq F((\phi_i(x'))_{i\in\mb{N}})_N$ by the above construction of $F$, so $(\phi_i(x))_{i\in\mb{N}}\neq (\phi_i(x'))_{i\in\mb{N}}$ whence $\theta(x)\neq \theta(x')$. The map $\theta$ is also continuous, and therefore extends to an injective continuous map $\overline{\orb(x_0)}\to\overline{\orb(f)}$, which we denote also by $\theta$. By compactness, it follows that $\theta$ is a homeomorphism $\overline{\orb(x_0)}\to\overline{\orb(f)}$, so $\overline{\orb(x_0)}$ and  $\overline{\orb(f)}$ are indeed conjugate $\mb{F}_p^\omega$-systems. 

It remains only to check that $f$ can in fact be assumed to take values in $\mc{H}_{p,k}$ (viewed as a subset of $\mb{T}^\mb{N}$).
This follows from results in \cite{CGSS-aff-ex}. To detail this, let us define for every $n\in \mb{N}$ the map $\gamma_n\in \hom(\mc{D}_1(\mb{F}_p^n),\mc{D}_1(\mb{F}_p^\omega))$ by $\gamma_n(v)=(v,0^{\omega\setminus[n]})$. For any $j\in \mb{N}$ let $\pi_j:\mb{T}^\mb{N}\to \mb{T}$ denote the projection to the $j$-th coordinate. Note that $\pi_j\co f\co \gamma_n\in \hom(\mc{D}_1(\mb{F}_p^n),\mc{D}_k(\mb{T}))$. By \cite[Proposition E.3]{CGSS-aff-ex}, we have that $\pi_j\co f\co \gamma_n-\pi_j\co f\co \gamma_n(0^{\mb{N}})$ has its image in $\abph_{k,\ell}$ for some $\ell\in[p-1]$. Letting $n\to \infty$ it is readily seen that $\pi_j\co f-\pi_j\co f(0^\mb{N})$ has its image inluded in $\abph_{k,\ell}$. The topological dynamical system $\overline{\orb(f)}$ is conjugate of $\overline{\orb(f-(\pi_j(f(0^\mb{N})))_{j\in \mb{N}})}$ and the latter is clearly isomorphic to the closure of an element in $\hom(\mc{D}_1(\mb{F}_p^\omega),\mc{H}_{p,k})$.
\end{proof}

\section{Proving the main result}\label{sec:mainproof}
\noindent In this section we prove Theorem \ref{thm:main-intro}. To this end we first gather the main ingredients, the first of which is \cite[Theorem 1.9]{CGSS-p-hom}, which we recall here.

\begin{theorem}\label{thm:erg-factors-as-nilspace-sys}
For every $k\in  \mb{N}$, the $k$-th Host-Kra factor of an ergodic $\mb{F}_p^\omega$-system is isomorphic to a $p$-homogeneous $k$-step nilspace system $(\ns,\mb{F}_p^\omega)$.
\end{theorem}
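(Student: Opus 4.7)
The plan is to apply the general nilspace-theoretic description of Host--Kra factors and then to leverage the special $p$-torsion feature of $\mb{F}_p^\omega$ to upgrade the output to a $p$-homogeneous nilspace system.

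First I would invoke the coupling-to-nilspace correspondence developed in \cite{CScouplings}. The Host--Kra measures of an ergodic system of order $k$ equip the cube spaces with a cubic coupling of order $k$, and the structure theorem for such couplings (see \cite[Theorem 5.11]{CScouplings}) yields a compact $k$-step nilspace $\ns$ together with a measure-preserving isomorphism between the original system (already assumed to equal its Host--Kra factor of order $k$) and a nilspace system on $\ns$. The acting group, here $\mb{F}_p^\omega$, acts on $\ns$ by continuous nilspace translations, and the isomorphism intertwines this action with the original one. This step establishes the ``$k$-step nilspace system'' part of the statement and is essentially group-agnostic.

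The heart of the argument is then to verify that $\ns$ is $p$-homogeneous in the sense of \cite[Definition 1.2]{CGSS-p-hom}. I would proceed by induction on $k$, factoring $\ns$ as a tower of abelian group extensions $\ns=\ns_{(k)}\to \ns_{(k-1)}\to \cdots\to \ns_{(0)}=\{*\}$; by the inductive hypothesis, $\ns_{(k-1)}$ is already a $p$-homogeneous $(k-1)$-step nilspace, so it suffices to show that the structure group of the top extension, together with its induced filtration, is $p$-homogeneous as a group-nilspace. The key algebraic input is that every $z\in \mb{F}_p^\omega$ satisfies $pz=0$, so the associated translation $T^z$ of $\ns$ has order dividing $p$. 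Ergodicity of the action on the top structure-group fibre, combined with the fact that translations by $\mb{F}_p^\omega$ generate a dense subgroup of the relevant translation group, implies that this structure group is a $p$-torsion compact abelian group. A further analysis of the jumps of the induced filtration, using that translations by $\mb{F}_p^\omega$-elements span the lower-degree pieces densely at each level, pins down the filtration shape required by the definition of $p$-homogeneity.

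The main obstacle is this last point: converting the pointwise identity $(T^z)^p=\id$ for $z\in \mb{F}_p^\omega$ into the correct algebraic constraint on the filtration of the top structure group. One has to carefully track how the $p$-torsion of the acting group propagates through the translations at each level and forces the nonzero jumps of the filtration to appear at the precise indices prescribed by the $p$-homogeneous definition (with the filtration jumping by a factor of $p$ every $p-1$ steps past level $\ell$). The density step relies on ergodicity together with the nilspace-translation analysis from \cite{CScouplings}; the inductive assembly over the degrees $1,\ldots,k$ then delivers the full $p$-homogeneous structure on $\ns$.
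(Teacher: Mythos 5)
First, a point of orientation: the paper does not prove this statement at all. Theorem \ref{thm:erg-factors-as-nilspace-sys} is quoted verbatim as \cite[Theorem 1.9]{CGSS-p-hom} and used as a black box, so there is no internal proof to compare yours against; you are in effect attempting to reprove the main structural theorem of that earlier paper. Your first step is sound and matches how that result is actually obtained: the cubic-coupling machinery of \cite[Theorem 5.11]{CScouplings} does produce, for an ergodic system of order $k$, an isomorphic $k$-step compact nilspace system, and this step is indeed group-agnostic.

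The genuine gap is the second half. Everything after ``The heart of the argument is then to verify that $\ns$ is $p$-homogeneous'' is a description of what would need to be done, not a proof of it; you say so yourself when you call the key step ``the main obstacle.'' That obstacle is precisely the content of \cite{CGSS-p-hom}, and it does not reduce to the bookkeeping you outline. Two concrete problems: (a) $p$-homogeneity in the sense of \cite[Definition 1.2]{CGSS-p-hom} is the property that every morphism $\mc{D}_1(\mb{Z}^n)\to\ns$ factors through the quotient $\mb{Z}^n\to\mb{F}_p^n$; it is \emph{not} equivalent to the structure groups being $p$-torsion together with a prescribed pattern of filtration jumps, because the extension data (the cocycles gluing $\ns_{(i)}$ over $\ns_{(i-1)}$) also constrains which discrete morphisms exist. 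Pinning this down is where the hard cocycle/phase-polynomial analysis of \cite{BTZ} and \cite{CGSS-p-hom} enters, and your sketch does not engage with it. (b) The passage from ``$(T^z)^p=\id$ for the countably many translations coming from $\mb{F}_p^\omega$'' to a statement about \emph{all} translations in $\tran(\ns)$, and thence to the filtration shape with jumps every $p-1$ steps, requires a density-plus-continuity argument on the translation groups $\tran_i(\ns)$ that you assert but do not supply; ergodicity alone gives density of the $\mb{F}_p^\omega$-orbit in $\ns$, not density of the induced translations in each $\tran_i(\ns)$. As written, the proposal is an accurate high-level roadmap of the cited theorem rather than a proof of it; to make it a proof you would either have to cite \cite[Theorem 1.9]{CGSS-p-hom} directly (as the paper does) or carry out the $p$-homogeneity verification in full.
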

\noindent In particular, any ergodic $\mb{F}_p^\omega$-system of order $k$ is isomorphic to a nilspace system $(\ns,\mb{F}_p^\omega)$. This result allow us to reduce the problem to the study of nilspace systems. 

The following is a quick  consequence of \cite[Theorem 4.3]{CGSS-aff-ex}.

\begin{theorem}\label{thm:covering}
For any $p$ prime and any $k\in \mb{N}$ the following holds. Let $\ns$ be any compact $k$-step $p$-homogeneous nilspace. Then there exists a fibration $\varphi:\mc{H}_{p,k}\to \ns$.
\end{theorem}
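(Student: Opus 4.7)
The plan is to derive Theorem \ref{thm:covering} directly from \cite[Theorem 4.3]{CGSS-aff-ex}, which we expect to supply, for any compact $k$-step $p$-homogeneous nilspace $\ns$, a fibration
\[
\psi:\prod_{\ell=1}^{k}(\abph_{k,\ell}^{(p)})^{n_\ell}\longrightarrow \ns
\]
from a product of the elementary $p$-homogeneous nilspaces $\abph_{k,\ell}^{(p)}$, where each index $n_\ell$ is at most countable (this countability being forced by the second-countability built into the notion of compact nilspace, as recalled at the start of Section \ref{sec:homspace}). Granted such a $\psi$, the second ingredient is the canonical coordinate projection
\[
\pi:\mc{H}_{p,k}=\prod_{\ell=1}^{k}(\abph_{k,\ell}^{(p)})^{\mb{N}}\longrightarrow \prod_{\ell=1}^{k}(\abph_{k,\ell}^{(p)})^{n_\ell},
\]
obtained by discarding all but $n_\ell$ coordinates in the $\ell$-th factor.

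I would then check that $\pi$ is a fibration. Since $\mc{H}_{p,k}$ and its image under $\pi$ are both abelian group nilspaces in the sense of Proposition \ref{prop:top-abr}, and $\pi$ is a continuous surjective group homomorphism that respects the defining filtrations factor by factor, the standard criterion for when a filtered group homomorphism induces a fibration on the associated group nilspaces (a direct check at the level of Host--Kra cubes) applies, giving that $\pi$ is a fibration. The desired morphism is then the composition $\varphi := \psi \co \pi$, which is a fibration because compositions of fibrations are fibrations.

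The only non-routine step in this plan is ensuring that \cite[Theorem 4.3]{CGSS-aff-ex} really outputs a countable-index product of elementary $p$-homogeneous nilspaces; I expect this to be essentially automatic from the second-countability hypothesis on $\ns$, but it is the one place that requires actually inspecting the statement being cited (rather than just its corollary used here). Everything else is bookkeeping: unfolding the definition of $\mc{H}_{p,k}$ as a direct product and verifying that projecting away extra coordinates of an abelian group nilspace yields a fibration onto the remaining sub-product.
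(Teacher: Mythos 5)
You have correctly identified that the whole theorem rests on the covering result \cite[Theorem 4.3]{CGSS-aff-ex}, but the form you guess for that result is not the form it actually takes, and the bridging step you set up does not repair the actual mismatch. The cited theorem produces, for every compact profinite-step $p$-homogeneous nilspace, a fibration onto it from the \emph{fixed} universal nilspace $\mc{H}_p=\prod_{\ell=1}^{\infty}(\abph^{(p)}_{\infty,\ell})^{\mb{N}}$; the index sets are already all of $\mb{N}$, so there is nothing to gain by discarding coordinates and no countability issue to check. The discrepancy between the domain $\mc{H}_p$ you are given and the domain $\mc{H}_{p,k}$ you want lies not in the size of the index sets but in the \emph{step}: $\mc{H}_p$ is built from the profinite-step elementary nilspaces $\abph^{(p)}_{\infty,\ell}$, whereas $\mc{H}_{p,k}$ is its $k$-step canonical factor, built from the finite quotients $\abph^{(p)}_{k,\ell}$. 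Your coordinate projection $\pi$ does not address this.

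The missing step --- which is essentially the entirety of the paper's proof --- is the universal property of canonical factors: since $\ns$ is $k$-step, the fibration $\mc{H}_p\to\ns$ factors through the $k$-step canonical factor of $\mc{H}_p$, which is precisely $\mc{H}_{p,k}$, and the induced map $\mc{H}_{p,k}\to\ns$ is again a fibration. Your auxiliary claims (that coordinate projections of abelian group nilspaces are fibrations, and that compositions of fibrations are fibrations) are both true, but they end up playing no role here. To your credit, you flagged the precise form of \cite[Theorem 4.3]{CGSS-aff-ex} as the one point requiring inspection; the repair, once you inspect it, is the one-line factorization argument above rather than the projection you propose.
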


\begin{proof}
The result \cite[Theorem 4.3]{CGSS-aff-ex} is stronger, indeed it shows that for every profinite-step $p$-homogeneous nilspace $\ns$  there is a fibration $\mc{H}_p\to\ns$. If $\ns$ is $k$-step then the fibration factors through the $k$-step canonical nilspace factor of $\mc{H}_p$, which is precisely $\mc{H}_{p,k}$. The result follows.
\end{proof}

The final ingredient that we need is the following.

\begin{proposition}\label{prop:lifting}
Let $\ns,\nss$ be nilspaces and let $\varphi:\nss\to \ns$ be a fibration. Then, for every $f\in \hom(\mc{D}_1(\mb{Z}^\omega),\ns)$ there exists $g\in \hom(\mc{D}_1(\mb{Z}^\omega),\nss)$ such that $\varphi \co g = f$.
\end{proposition}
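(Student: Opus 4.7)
The plan is to construct $g$ by an inductive lifting argument exploiting the cube-extension property of the fibration $\varphi$. First, I would write $\mb{Z}^\omega = \bigcup_{n \geq 0} \mb{Z}^n$, where $\mb{Z}^n := \langle e_1, \ldots, e_n\rangle$, and observe that a morphism $g \in \hom(\mc{D}_1(\mb{Z}^\omega), \nss)$ is the same as a compatible family $(g_n)_{n\ge 0}$ of morphisms $g_n \in \hom(\mc{D}_1(\mb{Z}^n), \nss)$ with $g_{n+1}|_{\mb{Z}^n} = g_n$. So it suffices to build such a family with $\varphi \co g_n = f|_{\mb{Z}^n}$. The base case $n=0$ just picks any $g_0(0) \in \varphi^{-1}(f(0))$, which exists because fibrations are surjective (the fibration axiom applied to $0$-cubes).

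For the inductive step, I would use the identification $\mc{D}_1(\mb{Z}^n) \cong \mc{D}_1(\mb{Z}^{n-1}) \times \mc{D}_1(\mb{Z})$ as cubespaces and treat $f|_{\mb{Z}^n}$ as a ``bivariate'' polynomial. For each $v \in \mb{Z}^{n-1}$ the restriction $a \mapsto f(v + a e_n)$ is a morphism $\mc{D}_1(\mb{Z}) \to \ns$, i.e.\ a polynomial sequence of degree at most the step of $\ns$; such a sequence lifts through $\varphi$ once a base-point value is prescribed (a standard fact proved by filling in successive cubes generated by $1 \in \mb{Z}$ using the cube-extension property of $\varphi$). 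This produces, in particular, a candidate lift $\tilde{\psi}_v$ of $f(v+\cdot\, e_n)$ with $\tilde{\psi}_v(0) = g_{n-1}(v)$.

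The main obstacle is \emph{consistency}: ensuring that these fibrewise lifts assemble into an actual morphism $g_n \colon \mc{D}_1(\mb{Z}^n) \to \nss$, not merely a set function lifting $f|_{\mb{Z}^n}$. To handle this I would perform the lift cube-by-cube rather than fibre-by-fibre: enumerate the countably many cubes $c \in \cu^m(\mc{D}_1(\mb{Z}^n))$ not entirely contained in $\mb{Z}^{n-1}$ and extend the partial lift one cube at a time, at each stage using the fibration property of $\varphi$ to assign values at the new vertices of $c$ consistently with the values already fixed on the face $c^{-1}(\mb{Z}^{n-1})$ and on earlier cubes. Each such extension step is exactly the data provided by the fibration axiom, and compatibility is automatic because each new vertex is fixed uniquely once enough partial data is in place. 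In effect this is an instance of the general principle that morphisms from an abelian group nilspace $\mc{D}_1(A)$ (with $A$ countable) into the base of a fibration admit a lift through the fibration; if a suitable statement is already available in the nilspace literature (e.g.\ in Cand:Notes1), I would invoke it and the proof would reduce to a reference.
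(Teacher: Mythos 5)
Your overall strategy is the same as the paper's: reduce to finitely generated pieces (every cube of $\mc{D}_1(\mb{Z}^\omega)$ has finite support, so a lift amounts to a compatible family of lifts on an exhausting sequence of such pieces) and build the lift inductively using the cube-completion property of the fibration. The paper exhausts $\mb{Z}^\omega$ by the finite boxes $[-L,L]^L\times\{0\}^{\mb{N}\setminus[L]}$ rather than by the subgroups $\mb{Z}^n$, and for the core finite-dimensional step it simply cites \cite[Corollary A.6]{CGSS-p-hom} --- which is exactly the ``suitable statement already available in the literature'' that you anticipate invoking. If you take that exit, your argument and the paper's essentially coincide.

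The self-contained version of your inductive step, however, has a genuine gap. You propose to enumerate the cubes of $\cu^m(\mc{D}_1(\mb{Z}^n))$ not contained in $\mb{Z}^{n-1}$ and complete them one at a time, asserting that ``compatibility is automatic because each new vertex is fixed uniquely once enough partial data is in place.'' This is not so: the fibration axiom guarantees the \emph{existence} of a completion of a corner lying over a given base cube, not its uniqueness, so a vertex reached through two different cubes in your enumeration can receive two different admissible values, and nothing forces the value chosen when completing one cube to be consistent with a cube appearing later in the list (whose already-assigned vertices need not form a corner, which is the only configuration the axiom lets you complete). Moreover, even if every enumerated cube is individually completed, you must still check that \emph{all} cubes of $\mc{D}_1(\mb{Z}^n)$ --- not only the enumerated ones --- are sent to cubes of $\nss$. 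This is precisely why the arguments in the literature proceed over nested finite boxes: the restriction of a morphism of $\mc{D}_1(\mb{Z}^n)$ to a box can be encoded via a single high-dimensional cube, each enlargement of the box is a genuine corner-completion, and the morphism property of the extension is then verifiable. So either quote \cite[Corollary A.6]{CGSS-p-hom} (or run its proof) for the finite-dimensional step, as the paper does, or recast your induction over boxes rather than cube-by-cube.
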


\begin{proof}
The proof of this result is essentially the same as the proof of \cite[Corollary A.6]{CGSS-p-hom}. In fact, the latter result establishes the similar claim but replacing $\mb{Z}^\omega$ with $\mb{Z}^n$ for some fixed $n$. The proof consists in proving that  we can define the values of $g$ inductively on sets of the form $[-L,L]^n\subset \mb{Z}^n$ for $L$ increasing (eventually defining a map on the whole of $\mb{Z}^n$). In $\mb{Z}^\omega$ we can carry out the same inductive argument but defining the values of $g$ in sets of the form $[-L,L]^L\times \{0\}^{\mb{N}\setminus[L]}\subset \mb{Z}^\omega$ for increasing values of $L$. Since $\bigcup_{L=1}^\infty [-L,L]^L\times \{0\}^{\mb{N}\setminus[L]} = \mb{Z}^\omega$, the result follows.
\end{proof}

As a consequence of these results we have the following lemma.

\begin{lemma}\label{lem:lift}
Let $\ns$ be any compact $k$-step $p$-homogeneous nilspace, and let $\varphi:\mc{H}_{p,k}\to \ns$ be the fibration from Theorem \ref{thm:covering}. Then for any $f\in \hom(\mc{D}_1(\mb{F}_p^\omega),\ns)$ there exists $g\in \hom(\mc{D}_1(\mb{F}_p^\omega),\mc{H}_{p,k})$ such that $\varphi \co g = f$.
\end{lemma}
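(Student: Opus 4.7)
The plan is to adapt the inductive scheme used for Proposition \ref{prop:lifting}. That proof passes from a finite-dimensional case (lifting morphisms from $\mc{D}_1(\mb{Z}^n)$ through a fibration, as in \cite[Corollary A.6]{CGSS-p-hom}) to $\mb{Z}^\omega$ by an exhaustion argument. I would replace $\mb{Z}^n$ by $\mb{F}_p^n$ and $\mb{Z}^\omega$ by $\mb{F}_p^\omega$ throughout, keeping the same target $\mc{H}_{p,k}$ and the same fibration $\varphi$.

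Specifically, I would first establish the finite analogue: for every $n\in \mb{N}$ and every $f_n\in \hom(\mc{D}_1(\mb{F}_p^n),\ns)$ there exists $g_n\in \hom(\mc{D}_1(\mb{F}_p^n),\mc{H}_{p,k})$ with $\varphi\co g_n=f_n$. This can be obtained by the same cube-by-cube induction as in \cite[Corollary A.6]{CGSS-p-hom}: enumerate the elements of $\mb{F}_p^n$ in an order in which each new element, together with previously assigned values, determines a cube in $\mc{D}_1(\mb{F}_p^n)$, and use the cube-lifting property of $\varphi$ at each extension step to assign the value of $g_n$ at the new element consistently. Once the finite case is in hand, I would apply the same exhaustion argument as in Proposition \ref{prop:lifting}, writing $\mb{F}_p^\omega=\bigcup_{L=1}^\infty \mb{F}_p^L\times\{0\}^{\mb{N}\setminus[L]}$ and inductively extending $g$ over these increasing sub-powers. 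Since every cube in $\mc{D}_1(\mb{F}_p^\omega)$ has its image contained in some finite sub-power (cubes in $\mc{D}_1(\mb{F}_p^\omega)$ involve only finitely many coordinates), the resulting map is a well-defined morphism in $\hom(\mc{D}_1(\mb{F}_p^\omega),\mc{H}_{p,k})$ satisfying $\varphi\co g=f$.

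The main obstacle, as in Proposition \ref{prop:lifting}, lies in the finite step, namely in checking that the cube-by-cube induction carries through for $\mb{F}_p^n$ just as for $\mb{Z}^n$. This amounts to choosing an enumeration of $\mb{F}_p^n$ such that at each stage the new point sits in cubes whose other vertices are already defined, so that the fibration property yields a consistent extension. Since parallelepipeds in $\mb{F}_p^n$ are determined by finitely many affine relations among their vertices (exactly as in $\mb{Z}^n$, though with coefficients in $\mb{F}_p$), the same kind of ordering argument used in \cite[Corollary A.6]{CGSS-p-hom} should apply with only cosmetic changes.
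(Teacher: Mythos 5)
There is a genuine gap, and it sits exactly where you locate the ``main obstacle'': the finite step over $\mb{F}_p^n$ cannot be done by a cube-by-cube induction that differs only cosmetically from the $\mb{Z}^n$ case. The extension argument behind \cite[Corollary A.6]{CGSS-p-hom} exploits the fact that $\mc{D}_1(\mb{Z}^n)$ is free enough that each new point is the missing vertex of a corner and consistency with all other cubes follows. Over $\mb{F}_p^n$ the cube structure $\mc{D}_1(\mb{F}_p^n)$ has many additional cubes coming from wrap-around relations (relations among the parameters that hold modulo $p$ but not over $\mb{Z}$), and the corner-completion property of a fibration gives no control over these extra constraints. In fact your finite claim, proved by a method that uses only the fibration property, would hold for arbitrary fibrations, and that statement is false. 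For example, take $p=2$, the fibration $\mc{D}_1(\mb{T})\to\mc{D}_1(\mb{T})$, $x\mapsto 2x$ (a surjective homomorphism carrying each term of the degree-$1$ filtration onto itself), and $f\in\hom(\mc{D}_1(\mb{F}_2),\mc{D}_1(\mb{T}))$ with $f(0)=0$, $f(1)=1/2$. The $2$-cube $q(v)=v_1+v_2$ of $\mc{D}_1(\mb{F}_2)$ forces any morphism $g:\mc{D}_1(\mb{F}_2)\to\mc{D}_1(\mb{T})$ to satisfy $2(g(1)-g(0))=0$, whereas a lift would need $2g(1)-2g(0)=f(1)-f(0)=1/2$; so no lift exists. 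Any correct proof must therefore use a special property of $\mc{H}_{p,k}$, namely its $p$-homogeneity, which your argument never invokes.

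The paper's proof takes the opposite route. It pulls $f$ back to $\tilde f=f\co\pi\in\hom(\mc{D}_1(\mb{Z}^\omega),\ns)$ along the quotient $\pi:\mb{Z}^\omega\to\mb{F}_p^\omega$, lifts $\tilde f$ to $\tilde g\in\hom(\mc{D}_1(\mb{Z}^\omega),\mc{H}_{p,k})$ via Proposition \ref{prop:lifting} (this is where the $\mb{Z}$-side exhaustion argument legitimately applies), and only then descends: since any cube of $\mc{D}_1(\mb{F}_p^\omega)$ has image in some $\mb{F}_p^N$, and since by the definition of $p$-homogeneity every element of $\hom(\mc{D}_1(\mb{Z}^N),\mc{H}_{p,k})$ factors through $\mb{F}_p^N$, the restriction of $\tilde g$ to the fundamental domain $[0,p-1]^{\mb{N}}$ defines a genuine morphism $g\in\hom(\mc{D}_1(\mb{F}_p^\omega),\mc{H}_{p,k})$ with $\varphi\co g=f$. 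To repair your outline you would have to replace the cube-by-cube induction over $\mb{F}_p^n$ by precisely this detour through $\mb{Z}^N$ followed by an appeal to the $p$-homogeneity of $\mc{H}_{p,k}$.
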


\begin{proof}
We can consider any such $f\in \hom(\mc{D}_1(\mb{F}_p^\omega),\ns)$ as an element $\tilde{f}\in  \hom(\mc{D}_1(\mb{Z}^\omega),\ns)$ simply by composing with the natural quotient map $\mb{Z}^\omega\to \mb{F}_p^\omega$. By Proposition \ref{prop:lifting}, there exists $\tilde{g}\in \hom(\mc{D}_1(\mb{Z}^\omega),\mc{H}_{p,k})$ such that $\varphi\co \tilde{g} = \tilde{f}$. Since $\mc{H}_{p,k}$ is $p$-homogeneous, if we restrict $\tilde  g$ to the usual fundamental domain of the homomorphism $\mb{Z}^\omega\to \mb{F}_p^\omega$ and we identify that fundamental domain $F$ with $\mb{F}_p^\omega$, we claim that this restriction equals some $g\in \hom(\mc{D}_1(\mb{F}_p^\omega),\mc{H}_{p,k})$. To be more precise, let $F\subset \mb{Z}^\omega$ be $\{(n_1,n_2,\ldots)\in \mb{Z}^\omega: \forall i\in \mb{N}, \; n_i\in[0,p-1]\}$ and consider the bijective map $\iota:F\to \mb{F}_p^\omega$ that sends $(n_1,n_2,\ldots)\mapsto (n_1\mod p,n_2 \mod p,\ldots)$. We claim that $g:=\tilde{g}\co \iota^{-1}$ is in $\hom(\mc{D}_1(\mb{F}_p^\omega),\mc{H}_{p,k})$.

To prove this note that it would suffice to prove that for any $\q\in \cu^n(\mc{D}_1(\mb{F}_p^\omega))$ we need to prove that $g\co \q\in \cu^n(\mc{H}_{p,k})$. For any such fixed $\q$, there exists an integer $N=N(\q)\ge 0$ such that $\q(\db{n})\subset \mb{F}_p^N$. If we denote by $\iota_N:[0,p-1]^N\times\{0\}^{\mb{N}\setminus[N]}\to \mb{F}_p^N$ note that $g\co \q = \tilde{g}\co \iota^{-1}\co \q = \tilde{g} \co \iota_N^{-1}\co \q = (\tilde{g}|_{\mb{Z}^N\times \{0\}^{\mb{N}\setminus\{0\}^{[N]}}}) \co \iota_N^{-1}\co \q$. But now $(\tilde{g}|_{\mb{Z}^N\times \{0\}^{\mb{N}\setminus\{0\}^{[N]}}}) $ is an element of $\hom(\mc{D}_1(\mb{Z}^N),\mc{H}_{p,k})$ and $\mc{H}_{p,k}$ is $p$-homogeneous. Thus by definition of $p$-homogeneous nilspaces,\footnote{See \cite[Definition 1.2]{CGSS-p-hom}.} $(\tilde{g}|_{\mb{Z}^N\times \{0\}^{\mb{N}\setminus\{0\}^{[N]}}}) \co \iota_N^{-1}$ is an element of $ \hom(\mc{D}_1(\mb{F}_p^N),\mc{H}_{p,k})$. Hence $g\co \q\in \cu^n(\mc{H}_{p,k})$ and as this was for an arbitrary $\q$ we have that $g\in \hom(\mc{D}_1(\mb{F}_p^\omega),\mc{H}_{p,k})$. By definition we have that $\varphi\co g = f$ and the result follows.
\end{proof}

\noindent We are ready to prove the main result of this paper, Theorem \ref{thm:main-intro}. Let us restate this theorem here, in a slightly more refined form.

\begin{theorem}\label{thm:main}
Let $(\ns,\mc{X},\lambda)$ be an ergodic $\mb{F}_p^\omega$-system of order $k$. Then there exists a minimal distal $k$-step $\mb{F}_p^\omega$-nilspace system $\nss$, which is topological Abramov of order at most $k$, such that for any $\mb{F}_p^\omega$-invariant measure $\mu$ on the Borel $\sigma$-algebra $\mc{Y}$ on $\nss$, the measure-preserving $\mb{F}_p^\omega$-system $(\ns,\mc{X},\lambda)$ is a factor of the $\mb{F}_p^\omega$-system $(\nss,\mc{Y},\mu)$.
\end{theorem}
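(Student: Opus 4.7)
The plan is to assemble the three ingredients prepared in this section. By Theorem \ref{thm:erg-factors-as-nilspace-sys}, we may replace $(\ns, \mc{X}, \lambda)$ by an isomorphic $p$-homogeneous $k$-step nilspace system, which as such is minimal and uniquely ergodic; in particular $\lambda$ is its unique $\mb{F}_p^\omega$-invariant Borel probability measure. Fix any basepoint $x_0 \in \ns$. Since the action of $\mb{F}_p^\omega$ on $\ns$ is by nilspace translations, the orbit map $f_0 : z \mapsto z \cdot x_0$ lies in $\hom(\mc{D}_1(\mb{F}_p^\omega), \ns)$, and the map $y \mapsto (z \mapsto z \cdot y)$ from $\ns$ into $\hom(\mc{D}_1(\mb{F}_p^\omega), \ns)$ is continuous, $\mb{F}_p^\omega$-equivariant, and injective (evaluate orbit maps at $z = 0$). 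Combined with minimality of $\ns$ and compactness, this identifies $\ns$ topologically with the orbit closure $\overline{\orb(f_0)}$ inside $\hom(\mc{D}_1(\mb{F}_p^\omega), \ns)$.

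Next I would apply Theorem \ref{thm:covering} to obtain a fibration $\varphi : \mc{H}_{p,k} \to \ns$, and then Lemma \ref{lem:lift} to lift $f_0$ to a morphism $g \in \hom(\mc{D}_1(\mb{F}_p^\omega), \mc{H}_{p,k})$ with $\varphi \co g = f_0$. Set $\nss := \overline{\orb(g)}$ inside $\hom(\mc{D}_1(\mb{F}_p^\omega), \mc{H}_{p,k})$. Since $\mc{H}_{p,k}$ is an abelian group-nilspace of degree at most $k$, Theorem \ref{thm:poly-gen-min-dis-ilspace-systems} immediately yields that $(\nss, \mb{F}_p^\omega)$ is a minimal distal $k$-step nilspace system which is topological Abramov of order at most $k$, giving the required $\nss$.

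To link $\nss$ back to $\ns$, consider the pointwise post-composition map $\Phi : \hom(\mc{D}_1(\mb{F}_p^\omega), \mc{H}_{p,k}) \to \hom(\mc{D}_1(\mb{F}_p^\omega), \ns)$ defined by $h \mapsto \varphi \co h$. It is continuous (the topology on the codomain is pointwise and $\varphi$ is continuous) and $\mb{F}_p^\omega$-equivariant (it commutes with shifts). Since $\Phi(g) = f_0$, equivariance gives $\Phi(\orb(g)) = \orb(f_0)$; continuity and compactness then give $\Phi(\nss) = \overline{\orb(f_0)}$, which is $\ns$ under the identification of the first paragraph. Hence $\Phi|_{\nss} : \nss \to \ns$ is a topological factor map. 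Now for any $\mb{F}_p^\omega$-invariant Borel probability measure $\mu$ on $\nss$, the pushforward $\Phi_* \mu$ is an $\mb{F}_p^\omega$-invariant Borel probability measure on $\ns$, so by unique ergodicity $\Phi_* \mu = \lambda$. Consequently $(\ns, \mc{X}, \lambda)$ is a measure-theoretic factor of $(\nss, \mc{Y}, \mu)$, as required.

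The heavy lifting in this argument has already been done by the preceding results, especially the covering Theorem \ref{thm:covering}, the lifting Lemma \ref{lem:lift}, and the structural analysis of orbit closures in $\hom(\mc{D}_1(\mb{F}_p^\omega), \mc{H}_{p,k})$ from Section \ref{sec:homspace}. The step in the proof itself that I expect to take the most care is the identification in the first paragraph of $\ns$ with $\overline{\orb(f_0)}$, which rests on the unique ergodicity and minimality of $p$-homogeneous nilspace systems of order $k$; the rest of the argument is a routine commutative-diagram check together with a pushforward-of-measure argument.
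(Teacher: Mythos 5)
Your proposal is correct and follows essentially the same route as the paper: identify $\ns$ with the orbit closure of the orbit map $\phi^*(x_0)$ in $\hom(\mc{D}_1(\mb{F}_p^\omega),\ns)$ using unique ergodicity, lift through the fibration $\varphi:\mc{H}_{p,k}\to\ns$ via Lemma \ref{lem:lift}, take $\nss=\overline{\orb(g)}$, and push forward invariant measures. The only cosmetic difference is that the paper invokes \cite[Theorem 5.3]{CGSS-p-hom} explicitly for the unique ergodicity (the statement of Theorem \ref{thm:erg-factors-as-nilspace-sys} alone does not record it) and additionally cites Theorem \ref{thm:AbramovChar}, whereas you draw the Abramov property directly from Theorem \ref{thm:poly-gen-min-dis-ilspace-systems}, which suffices.
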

\noindent This theorem could be made more concise if orbit closures in $\hom(\mc{D}_1(\mb{F}_p^\omega),\mc{H}_{p,k})$ were known to be always \emph{uniquely ergodic} $\mb{F}_p^\omega$-systems; see the discussion in the next section around Question \ref{Q:UniqueErgo}.
\begin{proof}
By \cite[Theorem 5.3]{CGSS-p-hom}, the $k$-th Host-Kra factor of an ergodic $\mb{F}_p^\omega$-system is isomorphic to a nilspace system $(\ns,\mb{F}_p^\omega)$ that is uniquely ergodic. This implies in particular that for any point $x_0\in \ns$, this point is transitive and thus $\overline{\orb(x_0)}=\overline{\{z(x_0)\in\ns: z\in \mb{F}_p^\omega\}}=\ns$.

Next, consider the map $\phi^*:
\ns \to  \hom(\mc{D}_1(\mb{F}_p^\omega),\ns)$, 
defined as $x \mapsto \phi^*(x)$ where $\phi^*(x)(z):=z(x)$ for any $z\in \mb{F}_p^\omega$. This map is continuous. Indeed, if $x_n\to x$, by the continuity of the action of $\mb{F}_p^\omega$, for any fixed $z\in \mb{F}_p^\omega$ we have $z(x_n)\to z(x)$. Moreover $\phi^*$ is equivariant, i.e.\ for any $z'\in \mb{F}_p^\omega$ we have $\phi^*(z'(x))(z)=z(z'(x)) = z'(z(x)) = z'(\phi^*(x)(z))$. As $\ns$ and $\hom(\mc{D}_1(\mb{F}_p^\omega),\ns)$ are compact metric spaces, we have that $\phi^*$ is a homeomorphism $\ns\to \phi^*(\ns)$ \cite[Theorem 26.6]{Munkres}. This together with the equivariance of $\phi^*$ implies that  $\ns$ and $\phi^*(\ns)$ are isomorphic as topological dynamical systems. We shall now prove that, as a \emph{topological} dynamical system, the system $\phi^*(\ns)$ is a factor of a minimal distal $\mb{F}_p^\omega$-nilspace system that is Abramov as claimed, and we shall thereafter obtain the measure-theoretic part of the conclusion of the theorem.

By the first paragraph of this proof, fixing any  $x_0\in \ns$, we have that $\phi^*(\ns)$ is the closure of the orbit of $\phi^*(x_0)$. By Theorem \ref{thm:covering} there exists a fibration $\varphi:\mc{H}_{p,k}\to \ns$. This induces the map $\varphi^*:\hom(\mc{D}_1(\mb{F}_p^\omega),\mc{H}_{p,k}) \to \hom(\mc{D}_1(\mb{F}_p^\omega),\ns)$ defined as $\varphi^*(f) = \varphi\co f$. Thus, we have the following diagram:
\begin{center}
\begin{tikzpicture}
  \matrix (m) [matrix of math nodes,row sep=2em,column sep=4em,minimum width=2em]
  {
      & \hom(\mc{D}_1(\mb{F}_p^\omega),\mc{H}_{p,k})  \\
     \ns & \hom(\mc{D}_1(\mb{F}_p^\omega),\ns). \\};
  \path[-stealth]
    (m-1-2) edge node [right] {$\varphi^*$} (m-2-2)
    (m-2-1) edge node [above] {$\phi^*$} (m-2-2);
\end{tikzpicture}
\end{center}
Note that $\varphi^*$ is continuous and equivariant. By Lemma \ref{lem:lift}, given the element $\phi^*(x_0)\in \hom(\mc{D}_1(\mb{F}_p^\omega),\ns)$ there exists $g\in \hom(\mc{D}_1(\mb{F}_p^\omega),\mc{H}_{p,k})$ such that $\varphi^*(g)=\phi^*(x_0)$.

Now let $\nss$ be the $\mb{F}_p^\omega$-system $\overline{\orb(g)}=\overline{\{g(\cdot+z):z\in \mb{F}_p^\omega\}}$. We have $\varphi^*(\nss)=\phi^*(\ns)$. To see this, note that $\orb(g)\subset \overline{\orb(g)}$, apply $\varphi^*$ and note that, since $\overline{\orb(g)}$ is compact, $\varphi^*(\overline{\orb(g)})$ is closed. In particular $\overline{\varphi^*(\orb(g))}\subset \varphi^*(\overline{\orb(g)})$. But $\overline{\varphi^*(\orb(g))}\supset \overline{\orb(\phi^*(x_0))}=\phi^*(\ns)$. To prove the opposite inclusion, fix any $g'\in \nss$ and let $(z_n)_{n\in \mb{N}}$ be a sequence in $\mb{F}_p^\omega$ such that $g(\cdot+z_n)\to g'$ in $\hom(\mc{D}_1(\mb{F}_p^\omega),\mc{H}_{p,k})$. Applying $\varphi^*$ we find that $\phi^*(x_0)(\cdot+z_n) = \varphi^*(g)(\cdot+z_n)$, which  equals $\varphi^*(g(\cdot+z_n))$ by the equivariance of $\varphi^*$, and this converges to $\varphi^*(g')$ by continuity of $\varphi^*$. Hence $\varphi^*(g')\in \overline{\orb(\phi^*(x_0))}$.

By Theorem \ref{thm:AbramovChar} the system  $\nss$ is a topological Abramov system of order at most $k$, and it is a minimal distal nilspace system by Theorem \ref{thm:poly-gen-min-dis-ilspace-systems}. 

Finally, note that if $\mu$ is any $\mb{F}_p^\omega$-invariant Borel probability measure on $\nss$, then since the image measure $\mu\co(\varphi^*)^{-1}$ is an $\mb{F}_p^\omega$-invariant Borel probability measure on $\ns$, by the unique ergodicity of $\ns$ we must have $\mu\co(\varphi^*)^{-1}=\lambda$.
\end{proof}

\section{Final remarks and open questions}

\noindent The subshift $\hom(\mc{D}_1(\mb{F}_p^\omega),\mc{H}_{p,k})$ has played a central role in this paper. In fact, what we have proved shows that this subshift plays a ``universal" role in the following sense.
\begin{corollary}
Every ergodic $\mb{F}_p^\omega$-system of order $k$ is a factor of an orbit closure \textup{(}equipped with an invariant probability measure\textup{)} in the subshift $\hom(\mc{D}_1(\mb{F}_p^\omega),\mc{H}_{p,k})$. 
\end{corollary}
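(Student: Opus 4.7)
This corollary is essentially a repackaging of Theorem \ref{thm:main}, so the plan is to extract its statement directly from the construction in that theorem's proof, adding only the small observation that invariant measures exist.

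\textbf{Step 1 (apply the main theorem).} Given an ergodic $\mb{F}_p^\omega$-system $(\ns,\mc{X},\lambda)$ of order $k$, Theorem \ref{thm:main} produces a minimal distal $k$-step $\mb{F}_p^\omega$-nilspace system $\nss$ which is topological Abramov of order at most $k$, together with the property that $(\ns,\mc{X},\lambda)$ is a factor of $(\nss,\mc{Y},\mu)$ for any $\mb{F}_p^\omega$-invariant Borel probability measure $\mu$ on $\nss$. Inspecting the proof, the system $\nss$ is constructed precisely as an orbit closure $\overline{\orb(g)}$ inside $\hom(\mc{D}_1(\mb{F}_p^\omega),\mc{H}_{p,k})$, where $g$ is obtained by lifting $\phi^*(x_0)\in\hom(\mc{D}_1(\mb{F}_p^\omega),\ns)$ through the fibration $\varphi:\mc{H}_{p,k}\to\ns$ using Lemma \ref{lem:lift}.

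\textbf{Step 2 (existence of an invariant measure).} To conclude, one must merely exhibit some $\mb{F}_p^\omega$-invariant Borel probability measure on the orbit closure $\nss=\overline{\orb(g)}$. Since $\nss$ is compact metric and $\mb{F}_p^\omega$ is a countable abelian (in particular amenable) group acting by homeomorphisms, such a measure $\mu$ exists by the classical Krylov--Bogolyubov theorem (or the Markov--Kakutani fixed-point theorem applied to a F{\o}lner averaging of any initial Borel probability measure). Applying the second clause of Theorem \ref{thm:main} to this $\mu$, the system $(\ns,\mc{X},\lambda)$ is a measure-preserving factor of the orbit closure $(\nss,\mc{Y},\mu)$ inside $\hom(\mc{D}_1(\mb{F}_p^\omega),\mc{H}_{p,k})$, which is exactly the corollary.

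There is no real obstacle here, since the substantive content is already contained in Theorem \ref{thm:main}; the only new ingredient is the elementary fact that amenable-group actions on compact metric spaces admit invariant probability measures. (One could also note, as remarked right after the statement of Theorem \ref{thm:main}, that if orbit closures in $\hom(\mc{D}_1(\mb{F}_p^\omega),\mc{H}_{p,k})$ turned out to be uniquely ergodic, then the choice of $\mu$ would be canonical, but this is not needed for the corollary.)
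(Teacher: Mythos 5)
Your proposal is correct and matches the paper's intent exactly: the corollary is stated in the paper without a separate proof, as an immediate repackaging of Theorem \ref{thm:main}, whose construction indeed realizes $\nss$ as an orbit closure in $\hom(\mc{D}_1(\mb{F}_p^\omega),\mc{H}_{p,k})$. Your added remark that an invariant measure exists by Krylov--Bogolyubov (using amenability of the abelian group $\mb{F}_p^\omega$) is a correct and sensible way to make the implicit step explicit.
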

\noindent It thus seems of interest to study further this dynamical system $\hom(\mc{D}_1(\mb{F}_p^\omega),\mc{H}_{p,k})$ and its minimal subsystems. In particular, as mentioned after Theorem \ref{thm:main}, it would be interesting to know if such a minimal subsystem is always uniquely ergodic. 
\begin{question}\label{Q:UniqueErgo}
Is every orbit closure in the subshift $\hom(\mc{D}_1(\mb{F}_p^\omega),\mc{H}_{p,k})$ uniquely ergodic?
\end{question}
\noindent There are positive results in similar directions, for example it is known that an orbit closure in $\hom(\mc{D}_1(\mb{Z}),\mc{D}_k(\mb{T}))$ is always uniquely ergodic; see \cite{Glasner,Salehi}.

This question on unique ergodicity can be studied also in more general settings, for instance, for subshifts of the form $\hom(\ab,\ns)$ where $\ab$ is a countable discrete abelian group and $\ns$ is a compact abelian group-nilspace (as in Section \ref{sec:homspace}). 

Widening the scope further, it would be interesting to shed light on the possible unique ergodicity of certain more general classes of nilspace systems. In relation to this, it would help to clarify further the relations between the measure-preserving and the purely topological viewpoints when working with such systems. Indeed, note that in order to prove Theorem \ref{thm:main}, we first started with a measure-preserving system, then using nilspace theory we  viewed it as a topological dynamical system (Theorem \ref{thm:erg-factors-as-nilspace-sys}), then we solved in that setting the problem of finding an extension which is \emph{topologically Abramov} (see Definition \ref{def:top-abramov}) and finally we went back to the measure-preserving setting (Theorem \ref{thm:main}). In the more classical and better-known case of nilsystems, a deeper connection between the topological and measure-preserving viewpoints is known. For example, we have the following result from \cite[Ch.\ 11, Sec.\ 2, Theorem 11]{HKbook}, which captures several previous results and summarizes this connection.
\begin{theorem}\label{thm:nilsmeastop}
Let $(X,\mu,T)$ be a nilsystem where $\mu$ is the Haar measure on $\ns$. Then the following properties are equivalent:
\begin{enumerate}
    \item The nilsystem $(X,\mu,T)$ is ergodic.
    \item The topological nilsystem $(X,T)$ is uniquely ergodic.
    \item The topological nilsystem $(X,T)$ is minimal.
    \item The topological nilsystem $(X,T)$ is topologically transitive.
\end{enumerate}
\end{theorem}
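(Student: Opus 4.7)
The plan is to run the cycle $(i) \Rightarrow (ii) \Rightarrow (iii) \Rightarrow (iv) \Rightarrow (i)$, using the specific structure of a nilsystem $X=G/\Gamma$ with $T$ given by left translation by some $a\in G$.

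Three of the links are short. For $(ii) \Rightarrow (iii)$: Haar measure on $G/\Gamma$ has full support, so if it is the unique $T$-invariant probability measure, every orbit closure must carry a $T$-invariant measure and hence equal $X$, giving minimality. The implication $(iii) \Rightarrow (iv)$ is immediate. For $(iv) \Rightarrow (i)$, I would project to the maximal toral factor $X_{\mr{ab}} := G/\overline{[G,G]}\,\Gamma$; a topologically transitive point projects to a dense orbit in the torus $X_{\mr{ab}}$, and Weyl's equidistribution theorem then yields ergodicity of the translation on $X_{\mr{ab}}$. A standard Fourier argument running along the descending central series of $G$ (the Auslander--Green--Hahn ergodicity criterion) then lifts this to ergodicity of Haar measure on all of $X$.

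The heart of the proof is $(i) \Rightarrow (ii)$, which is Parry's theorem that ergodic nilsystems are uniquely ergodic. I would argue by induction on the nilpotency class $k$ of $G$, decomposing $X$ as a tower $X_k \to X_{k-1} \to \cdots \to X_1$ of principal torus extensions coming from the central series of $G$. The base case $X_1$ is a compact abelian group, where unique ergodicity of an equidistributed translation is Weyl's theorem. The main obstacle is the inductive step: showing that ergodicity of $T$ on $X_j$, combined with unique ergodicity on $X_{j-1}$, forces unique ergodicity on $X_j$. Here I would use a Mackey-style analysis of isometric group extensions: any $T$-invariant probability measure $\nu$ on $X_j$ projects to Haar on $X_{j-1}$ by the inductive hypothesis, and its fibrewise disintegration is tightly constrained by the central-extension cocycle coming from the group law on $G$, pinning it down to Haar on each torus fibre.
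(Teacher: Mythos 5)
First, a point of reference: the paper does not prove Theorem \ref{thm:nilsmeastop} at all; it is quoted from \cite[Ch.~11, Sec.~2, Theorem~11]{HKbook} as known background, so there is no in-paper proof to match your argument against. The closest internal content is Proposition \ref{prop:topmeasconn}, which establishes the chain $(i)\Rightarrow(ii)\Rightarrow(iii)\Rightarrow(iv)$ for the more general class of nilspace systems and explicitly leaves $(iv)\Rightarrow(i)$ open in that generality. Measured against that, your cycle structure is the standard classical route, and the easy links are fine: $(ii)\Rightarrow(iii)$ via full support of Haar measure and $(iii)\Rightarrow(iv)$ are exactly the arguments used in the paper's proposition.

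The two substantive implications, however, are invoked rather than proved, and that is where the comparison is informative. For $(i)\Rightarrow(ii)$ (Parry's theorem), your ``Mackey-style analysis \dots pinning it down to Haar on each torus fibre'' is precisely the content that has to be supplied; as written it is a pointer to a known argument, not an argument. The proof of Proposition \ref{prop:topmeasconn} shows a cleaner way to run the same induction, which transfers verbatim to your tower of torus extensions: given a $T$-invariant $\nu$ on $X_j$ projecting to Haar on $X_{j-1}$, average $\nu$ over the vertical torus (whose translations commute with $T$); the averaged measure is invariant under $T$ and under the fibre rotations and projects to Haar below, hence equals Haar on $X_j$; since Haar is ergodic it is extremal in the convex set of invariant measures, so almost every vertical translate of $\nu$ equals Haar, and translating back gives $\nu=$ Haar. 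This avoids disintegrations and Mackey groups entirely, and is essentially Furstenberg's original device. For $(iv)\Rightarrow(i)$, the reduction to the maximal toral factor plus the Auslander--Green--Hahn criterion is the standard route, but note two caveats: (a) it is again an appeal to a nontrivial theorem rather than a proof, and (b) in the generality of the statement $G$ need not be connected, and the naive factor $G/\overline{[G,G]}\Gamma$ is not the correct object in the disconnected case --- one must first reduce to the subgroup of $G$ generated by the identity component and the translating element, a reduction that occupies a nontrivial part of the treatment in \cite{HKbook}. In summary: correct skeleton, a genuinely different (disintegration-based rather than averaging-based) route for $(i)\Rightarrow(ii)$ than the one the paper uses for its nilspace analogue, but both hard implications are left at the level of citations to major theorems rather than proofs.
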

\noindent This raises the question of whether an analogous result holds for nilspace systems. To discuss this in more detail we first present the following proposition, which goes some way toward obtaining such an analogous result.

\begin{proposition}\label{prop:topmeasconn}
Let $\ns$ be a $k$-step compact nilspace, let $G$ be a countable discrete nilpotent group, let $\phi:G\to \tran(\ns)$ be a homomorphism, and let $\mu_{\ns}$ be the Haar measure on $\ns$ \textup{(}see \cite[\S 2.2.2]{Cand:Notes2}\textup{)}. Each of the following properties implies the next one:
\begin{enumerate}
    \item The measure-preserving nilspace system $(\ns,\mu_{\ns},G,\phi)$ is ergodic.
    \item The topological nilspace system $(\ns,G,\phi)$ is uniquely ergodic,
    \item The topological nilspace system $(\ns,G,\phi)$ is minimal.
    \item The topological nilspace system $(\ns,G,\phi)$ is topologically transitive.
\end{enumerate}
\end{proposition}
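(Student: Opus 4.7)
The plan is to establish the three implications in sequence, with the main work in $(i)\Rightarrow(ii)$. Throughout, I will exploit the fact that the Haar measure $\mu_\ns$ has full support on any compact nilspace $\ns$.

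The implication $(iii)\Rightarrow(iv)$ is immediate from the definitions, since minimality means every orbit is dense, in particular yielding a transitive point. For $(ii)\Rightarrow(iii)$, suppose toward contradiction that a nonempty proper closed $G$-invariant subset $Y\subsetneq \ns$ exists. Since $G$ is countable and $Y$ is a compact metric $G$-space, the Krylov--Bogolyubov theorem provides a $G$-invariant Borel probability measure $\nu$ on $Y$, which extends by zero to a $G$-invariant measure on $\ns$ concentrated in $Y$. Because $\mu_\ns$ has full support, $\mu_\ns(\ns\setminus Y)>0$, and thus $\nu\neq \mu_\ns$, contradicting unique ergodicity.

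For $(i)\Rightarrow(ii)$, I would proceed by induction on the step $k$ of $\ns$. The base case $k=0$ is trivial. For the inductive step, let $\pi\colon \ns\to \ns_{k-1}$ be the canonical projection onto the $(k-1)$-step factor, which is a $G$-equivariant principal $A_k$-bundle, with $A_k$ the $k$-th structure group. A key structural fact is that $A_k$, being the innermost term of the filtered translation group $\tran(\ns)$, satisfies $[\tran(\ns),A_k]=\{1\}$; in particular, $A_k$ commutes with $\phi(G)$. Given any $G$-invariant Borel probability measure $\nu$ on $\ns$, its pushforward $\pi_*\nu$ is $G$-invariant on $\ns_{k-1}$; since $\mu_{\ns_{k-1}}$ is still $G$-ergodic (factors of ergodic systems are ergodic), the inductive hypothesis gives $\pi_*\nu=\mu_{\ns_{k-1}}$. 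The centrality of $A_k$ implies that $a_*\nu$ is again $G$-invariant for every $a\in A_k$, so averaging over $A_k$ produces a $(G\times A_k)$-invariant measure with base projection $\mu_{\ns_{k-1}}$; the transitivity of $A_k$ on each fiber then forces this average to equal $\mu_\ns$. To upgrade this to $\nu=\mu_\ns$, I would appeal to the classical Furstenberg--Anzai criterion: an abelian extension of a uniquely ergodic system is itself uniquely ergodic provided the natural Haar lift on the extension is ergodic, which is exactly hypothesis (i).

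The main obstacle is this final step. A clean execution is to Fourier decompose $L^2(\mu_\ns)=\bigoplus_{\chi\in\widehat{A_k}} L^2_\chi$ into $A_k$-isotypic components, each of which is $G$-stable by centrality; ergodicity of $\mu_\ns$ translates to the absence of nonzero $G$-invariant vectors in the nontrivial isotypic pieces, which in turn forces the disintegration of any $G$-invariant $\nu$ over $\mu_{\ns_{k-1}}$ to consist $\mu_{\ns_{k-1}}$-a.e.\ of fiber-Haar measures, so $\nu=\mu_\ns$. The points requiring most care are the justification that $A_k$ is central in $\tran(\ns)$ (from the filtered-group structure of $\tran(\ns)$ with $\tran_{k+1}(\ns)=\{1\}$), and the extension of the Fourier/cocycle argument to an arbitrary countable discrete nilpotent $G$ rather than a single transformation.
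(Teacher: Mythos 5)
Your outline matches the paper's strategy for $(i)\Rightarrow(ii)$ almost exactly: induction on the step, pushing $\nu$ forward to the $(k-1)$-step factor (where the induced action is ergodic, so the inductive hypothesis gives $\pi_*\nu=\mu_{\ns_{k-1}}$), and then averaging the translates $\nu\circ V_h$ over the last structure group $\ab_k(\ns)$, using that $\ab_k(\ns)$ commutes with all translations so that each $\nu\circ V_h$ is again $G$-invariant. The one place where you diverge is precisely the step you flag as ``the main obstacle'', and there the paper's resolution is much lighter than your proposed Fourier/cocycle argument: once you know that $\xi:=\int_{\ab_k(\ns)}\nu\circ V_h\,d\mu_{\ab_k(\ns)}(h)$ equals $\mu_{\ns}$ (which follows from $\xi$ being $\ab_k(\ns)$-invariant with $\xi\circ\pi_{k-1}^{-1}=\mu_{\ns_{k-1}}$, by the characterization of Haar measure in \cite[Proposition 2.2.5]{Cand:Notes2}), you have exhibited the \emph{ergodic} measure $\mu_{\ns}$ as a barycenter of $G$-invariant measures. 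Since ergodic measures are extreme points of the convex set of invariant measures, $\nu\circ V_h=\mu_{\ns}$ for almost every $h$, and hence $\nu=\mu_{\ns}$. No disintegration, measurable trivialization of the $\ab_k(\ns)$-bundle, or isotypic decomposition is needed; your Furstenberg-style spectral argument can be made to work, but as written it is only a sketch and would require extra care to handle a nontrivial principal bundle and a whole countable group of transformations rather than a single map.

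Two smaller points. In $(ii)\Rightarrow(iii)$ your Krylov--Bogolyubov argument needs $G$ to be amenable to produce an invariant measure on the proper closed invariant subset; this holds because $G$ is nilpotent, but you should say so. Both your argument and the paper's (which instead invokes that the support of the unique invariant measure is a minimal set) also rely on the fact that $\mu_{\ns}$ is itself $G$-invariant and has full support, which in the nilspace setting are the cited facts \cite[Lemma 2.2.6 and Proposition 2.2.11]{Cand:Notes2}; these should be made explicit rather than treated as obvious.
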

\begin{proof}
The implication $(i)\implies (ii)$ follows from \cite[Lemma 5.6]{CGSS-p-hom} (where it is actually proved that for every integer $n\geq 0$, if $(\cu^n(\ns),\cu^n(G),\mu_{\cu^n(\ns)})$ is ergodic, then it is uniquely ergodic). Let us give  a short alternative proof of this implication here.

Suppose that $(i)$ holds, thus $\phi(G)\subset \tran(\ns)$ acts ergodically on $\ns$, relative to the Haar measure $\mu_{\ns}$ on $\ns$. Let $\nu$ be any $\phi(G)$-invariant Borel probability measure on $\ns$. We will prove that $(i)$ implies $(ii)$ by induction on $k$. The case $k=1$ follows from classical results, see e.g. \cite[Proposition 4.3.3]{Hass&Katok}. For $k>1$, we first observe that by induction on $k$ we can assume that $\nu\co \pi_{k-1}^{-1}=\mu_{\ns_{k-1}}$. Indeed, if $\mu_{\ns}$ is ergodic then $\mu_{\ns_{k-1}}=\mu_{\ns}\co \pi_{k-1}^{-1}$ is also ergodic relative to the action of $\widehat{\pi_{k-1}}(\phi(G))$.\footnote{See \cite[Lemma 1.5]{CGSS}.} In particular, as $\nu\co \pi_{k-1}^{-1}$ is preserved under the action of $\widehat{\pi_{k-1}}(\phi(G))$ and $(\ns_{k-1},\widehat{\pi_{k-1}}(\phi(G)))$ is uniquely ergodic by induction on $k$, we conclude that $\nu\co\pi_{k-1}^{-1} = \mu_{\ns_{k-1}}$.

Now for any $h\in\ab_k(\ns)$ let $V_h:\ns\to \ns$ be the map $V_h(x):=x+h$, (where addition here denotes the action of the structure group $\ab_k(\ns)$ on $\ns$). Letting $\mu_{\ab_k(\ns)}$ be the Haar probability measure on the compact abelian group $\ab_k(\ns)$, consider the Borel measure $\xi:=\int_{\ab_k(\ns)}\nu\co V_h\;d\mu_{\ab_k(\ns)}(h)$ on $\ns$ (i.e.\ defined by $\xi(A):=\int_{\ab_k(\ns)}\nu(A+h)\;d\mu_{\ab_k(\ns)}(h)$). Since  translations in nilspaces commute with the action of $ \ab_k(\ns)$ (by \cite[Lemma 3.2.37]{Cand:Notes1}) we conclude that for any $h\in \ab_k(\ns)$, the measure $\nu\co V_h$ is preserved by the action of $\phi(G)$, and therefore so is $\xi$. A simple calculation also shows that $\xi\co\pi_{k-1}^{-1}=\mu_{\ns_{k-1}}$. Furthermore, the measure $\xi$ is invariant under the action of $V_h$ for any $h\in \ab_k(\ns)$. Thus, by \cite[Proposition 2.2.5]{Cand:Notes2} we conclude that in fact $\xi = \mu_{\ns}$. But we have assumed that $\mu_{\ns}$ is ergodic, and we have written it as a convex combination $\mu_{\ns}=\int_{\ab_k(\ns)} \nu \co V_h\;d\mu_{\ab_k(\ns)}(h)$. As ergodic measures are extreme points in the convex set of invariant measures, we conclude that for almost every $h\in \ab_k(\ns)$ we have $\nu\co V_h=\mu_{\ns}$. Thus $\nu=\mu_{\ns}$ and $(ii)$ follows.\footnote{A similar argument shows that for any $n\in \mb{N}$, if $(\cu^n(\ns),\mu_{\cu^n(\ns)},\cu^n(G))$ is ergodic then $(\cu^n(\ns),\cu^n(G))$ is uniquely ergodic.}

To see the implication $(ii)\implies (iii)$, note that if $(\ns,G,\phi)$ is uniquely ergodic, then by standard arguments the support of the unique $G$-invariant measure is a minimal set (see e.g.\ \cite[Proposition 6.2.1]{O&V}). The unique invariant measure must be the Haar measure $\mu_{\ns}$, since this is $G$-invariant by \cite[Lemma 2.2.6]{Cand:Notes2}. Since $\mu_{\ns}$ is also a strictly positive measure \cite[Proposition 2.2.11]{Cand:Notes2}, the support of $\mu_{\ns}$ is $\ns$, so $\ns$ is minimal.

The implication $(iii)\implies (iv)$ is trivial. 
\end{proof}
\noindent Given Proposition \ref{prop:topmeasconn}, to obtain a full generalization of Theorem \ref{thm:nilsmeastop} for these nilspace systems, it would suffice to prove that property $(iv)$ above always implies property $(i)$.

\begin{question}
Let $(\ns,G,\phi)$ be a compact nilspace system as above. If $(\ns,G,\phi)$ is topologically transitive, must $(\ns,\mu_{\ns},G,\phi)$ be ergodic?
\end{question}

\end{document}